\title{\large{Pointwise estimates of weighted Bergman kernels in several complex variables}}
\author{Gian Maria Dall'Ara}
\address{Scuola Normale Superiore, Pisa, Italy}
\email{gianmaria.dallara@sns.it}
\date{\today}
\newcommand{\R}{\mathbb{R}}
\newcommand{\N}{\mathbb{N}}
\newcommand{\Q}{\mathbb{Q}}
\newcommand{\C}{\mathbb{C}}
\newcommand{\dbar}{\overline{\partial}}
\newcommand{\be}{\begin{equation*}}
\newcommand{\ee}{\end{equation*}}
\newcommand{\bel}{\begin{equation}}
\newcommand{\eel}{\end{equation}}
\newcommand{\bee}{\begin{eqnarray*}}
\newcommand{\eee}{\end{eqnarray*}}
\newcommand{\eps}{\varepsilon}
\newtheorem{thm}{Theorem}
\newtheorem{lem}[thm]{Lemma}
\newtheorem{prop}[thm]{Proposition}
\newtheorem{dfn}[thm]{Definition}
\begin{document}

\maketitle

\begin{abstract}
We prove new pointwise bounds for weighted Bergman kernels in $\C^n$, whenever a coercivity condition is satisfied by the associated weighted Kohn Laplacian on $(0,1)$-forms. Our results extend the ones obtained by Christ in \cite{christ}. 

Our main idea is to develop a version of Agmon theory (originally introduced in \cite{agmon} to deal with Schr\"odinger operators) for weighted Kohn Laplacians on $(0,1)$-forms, inspired by the fact that these are unitarily equivalent to certain generalized Schr\"odinger operators.
\end{abstract}

\section{Introduction}

\subsection{The problem and previous results}
The weighted Bergman kernel with respect to the weight $\varphi:\C^n\rightarrow\R$ is the integral kernel of the orthogonal projector of the weighted $L^2$ space\be
L^2(\C^n,\varphi):=\left\{f:\C^n\rightarrow\C\ :\ \int_{\C^n}|f(z)|^2e^{-2\varphi(z)}d\mathcal{L}(z)<+\infty\right\}
\ee onto its subspace $A^2(\C^n,\varphi)$ consisting of holomorphic functions (here and in the sequel $d\mathcal{L}(z)$ denotes Lebesgue measure in $\C^n$). See Section \ref{prel} for the precise definitions. The weighted Bergman kernel is a function \be
K_\varphi:\C^n\times\C^n\longrightarrow\C,
\ee
and the goal of this paper is to give pointwise estimates for its values under appropriate assumptions on $\varphi$. 

An extensive literature is devoted to the study of Bergman kernels in the general context of a complex manifold $M$ (when $M$ is not $\C^n$, the unweighted case $\varphi=0$ may already be very interesting), and a significant part of this literature has deep ties with harmonic analysis and partial differential equations (see, e.g., \cite{kerzman}, \cite{christ-regularity}, \cite{fefferman-kohn}, \cite{nagel-rosay-stein-wainger}, \cite{mcneal-stein-bergman}, \cite{koenig}, \cite{nagel-stein}).  

A particular motivation for the study of weighted Bergman kernels in $\C^n$ comes from the analysis of the Cauchy-Sz\"ego projection on certain model hypersurfaces in $\C^n$. In fact, if $\varphi$ is a plurisubharmonic non-harmonic polynomial, \be
M_\varphi:=\{z\in\C^{n+1}\ :\ \Im(z_{n+1})=\varphi(z_1,\dots,z_n)\}
\ee
is a good model for the boundary of a finite-type pseudo-convex domain (see \cite{dangelo-book}). The Cauchy-Sz\"ego projection is the orthogonal projector of $L^2(M_\varphi)$ onto the subspace of CR functions, where the unspecified measure is Lebesgue measure with respect to the coordinates $(z_1,\dots,z_n,\Re(z_{n+1}))$. The invariance under translation in the direction of $\Re(z_{n+1})$ allows to take a Fourier transform in that variable and reduce the study of the Cauchy-Sz\"ego projection to that of the family of weighted Bergman kernels $\{K_{\tau \varphi}\}_{\tau>0}$ (see \cite{has linger-bergmanszego}).

\par In the one-dimensional case, an important contribution to the analysis of $K_\varphi$ was given by Christ \cite{christ} (but see also \cite{marzo-ortegacerda}). Christ works under the assumption that $\varphi$ is subharmonic and that $\Delta\varphi(z)d\mathcal{L}(z)$ is a doubling measure giving a uniformly positive measure to euclidean discs of radius $1$. The hypotheses on $\Delta\varphi(z)d\mathcal{L}(z)$ are a sort of finite-type assumption, and are automatically verified when $\varphi$ is a subharmonic non-harmonic polynomial. 

The main result on $K_\varphi$ obtained by Christ is the estimate:
\be
|K_\varphi(z,w)|\leq Ce^{\varphi(z)+\varphi(w)}\frac{e^{-\eps d_0(z,w)}}{\rho_0(z)\rho_0(w)}\qquad\forall z,w\in\C,
\ee
where $C$ and $\eps$ are positive constants independent of $z$ and $w$,
\bel\label{intro-rho}
\rho_0(z):=\sup\left\{r>0:\ \int_{|w-z|\leq r}\Delta\varphi\leq 1\right\},
\eel
and $d_0$ is the Riemannian distance on $\C\equiv\R^2$ associated to the metric \be\rho_0(z)^{-2}(dx^2+dy^2).\ee 

Christ's proof appeals to the observation made by Berndtsson \cite{berndtsson} that the weighted Kohn Laplacian $\Box_\varphi$, an elliptic operator naturally occurring in this context (see Section \ref{prel}), is unitarily equivalent to a magnetic Schr\"odinger operator whose electrical potential is $\Delta\varphi$, and to Agmon theory, a powerful tool developed to establish exponential decay of eigenfunctions of Schr\"odinger operators with non-negative potentials (see \cite{agmon}). Thanks to the non-negativity of $\Delta\varphi$ and the diamagnetic inequality, one may ignore the magnetic potential when $n=1$ (see Section \ref{onevsmany-sec} for more details on this point).

The result of Christ was extended by Delin \cite{delin} to several complex variables under the assumption of strict plurisubharmonicity of the weight.

\subsection{Our results} Our goal in the present work is to prove pointwise estimates of $K_\varphi$ in several complex variables when the weight is (not necessarily strictly) plurisubharmonic. Trying to extend Christ's approach to $n\geq2$, one may observe that the weighted Kohn Laplacian is an operator acting on $(0,1)$-forms which is unitarily equivalent to a \emph{matrix magnetic Schr\"odinger operator}, a generalized magnetic Schr\"odinger operator whose electrical potential is Hermitian matrix-valued (see Section \ref{kohnschrod-sec} for the details), but here appears a serious difficulty: this electrical potential is never non-negative when $n\geq2$. Thus one cannot proceed in analogy with Christ's paper, via a diamagnetic inequality and an appropriate generalization of Agmon theory to the relevant class of generalized Schr\"odinger operators. 

Our way to overcome this obstacle is to apply the methods of Agmon theory directly to $\Box_\varphi$, without passing to Schr\"odinger operators. 

If $\mu:\C^n\rightarrow[0,+\infty]$, we say that $\Box_\varphi$ is \emph{$\mu$-coercive} if \be\Box_\varphi\geq\mu^2\ee as self-adjoint operators on the appropriate Hilbert spaces (here $\mu$ denotes the operator of multiplication by $\mu$). The precise definition is in Section \ref{coerc-sec}. We prove (Theorem \ref{bergman-thm}) that if $\varphi:\C^n\rightarrow\R$ is a plurisubharmonic weight such that $\Box_\varphi$ is $\kappa^{-1}$-coercive, and $\varphi$ and $\kappa$ meet some additional mild restrictions, then we have the following estimate for the weighted Bergman kernel:\bel\label{intro-mu}
|K_\varphi(z,w)|\leq Ce^{\varphi(z)+\varphi(w)}\frac{\kappa(z)}{\rho(z)}\frac{e^{-\eps d_\kappa(z,w)}}{\rho(z)^n\rho(w)^n}\qquad\forall z,w\in\C^n,
\eel
where \bel\label{intro-rho2}
\rho(z):=\sup\left\{r>0:\ \max_{|w-z|\leq r}\Delta\varphi(w)\leq r^{-2}\right\},
\eel
and $d_\kappa$ is the Riemannian distance associated to the metric \be\kappa(z)^{-2}\sum_{j=1}^n(dx_j^2+dy_j^2)\qquad(z_j=x_j+iy_j).\ee

Notice that \eqref{intro-rho} is better than \eqref{intro-rho2}, in the sense that, when $n=1$, $\rho\leq \sqrt{\pi}\rho_0$. This difference comes from the use of $L^\infty$ rather than $L^1$ bounds in our arguments. We do not consider this a serious limitation, since $\rho_0$ and $\rho$ are comparable when the weight $\varphi$ is a polynomial.

The important thing to observe is that the distance $d_0$ in Christ's estimate is replaced by $d_\kappa$ in our estimate, and that a factor $\frac{\kappa(z)}{\rho(z)}$ appears. 

If $\Box_\varphi$ is $c\rho^{-1}$-coercive (for some $c>0$), which is the case when the eigenvalues of the complex Hessian of $\varphi$ are comparable (Lemma \ref{mucomparable-lem}), then our result is the natural generalization of Christ's, that is
\be
|K_\varphi(z,w)|\leq Ce^{\varphi(z)+\varphi(w)}\frac{e^{-\eps d_\rho(z,w)}}{\rho(z)^n\rho(w)^n}\qquad\forall z,w\in\C^n.
\ee

In general, the best one should expect to be true is that $\Box_\varphi$ be $\kappa^{-1}$-coercive for some $\kappa$ larger than $\rho$. In this case the factor $\frac{\kappa}{\rho}$ reflects the non-comparability of eigenvalues. This phenomenon could be related to the appearance of an analogous term in the results of Nagel and Stein on decoupled domains (more precisely, the functions $B_k$ in Theorem 2.4.2 of \cite{nagel-stein}).

\subsection{Structure of the paper and a few details on the methods employed}

After introducing weighted Bergman spaces and kernels, $\dbar$-problems and Kohn Laplacians in Section \ref{prel}, in Section \ref{coerc-sec} we define the notion of $\mu$-coercivity for Kohn Laplacians. This is a very natural concept appearing (under different names) in a lot of literature on elliptic operators (from our perspective the most relevant example is \cite{agmon}), and we simply apply it to Kohn Laplacians. 

In Section \ref{radius-sec} we introduce another notion appearing in our main result, i.e., radius functions and associated distances. We also show how to associate a radius function to a potential. This is a known construction in the theory of Schr\"odinger operators (see \cite{shen}). 

Once all the ingredients are in place, in Section \ref{adm-sec} we define the class of \emph{admissible} weights to which our results apply. The next two sections are the heart of the paper. In Section \ref{exp-sec} we prove that whenever the weight is admissible and the weighted Kohn Laplacian is $\mu$-coercive for some $\mu$ satisfying a few mild hypotheses, the canonical solutions to $\dbar$-problems with certain compactly supported data exhibit an exponential decay which is quantitatively controlled by $\mu$. To deduce pointwise estimates of the weighted Bergman kernel from estimates of canonical solutions of the associated $\dbar$-problem, we use an argument sometimes dubbed \emph{Kerzman trick} which first appeared in \cite{kerzman}, and adapted to the weighted context by Delin \cite{delin}. This is done in Section \ref{bergman-sec}, where our main result (Theorem \ref{bergman-thm}) is stated and proved.

The rest of the paper is devoted to the specialization of Theorem \ref{bergman-thm} to admissible weights whose complex Hessian has comparable eigenvalues. Christ suggested in \cite{christ} that the discussion of this special case should be the first step to be taken in the study of weighted Bergman kernels in several variables. In order to do that, in Section \ref{kohnschrod-sec} we describe the unitary equivalence of weighted Kohn Laplacians and matrix Schr\"odinger operators, and apply in Section \ref{comp-sec} a version of the well-known Fefferman-Phong inequality, which we prove in Appendix \ref{fph-sec}. 

\subsection{Further directions} Theorem \ref{bergman-thm} is essentially a conditional result giving a non-trivial estimate for $K_\varphi(z,w)$ whenever one knows that $\Box_\varphi$ is $\mu$-coercive for some $\mu$. The larger the $\mu$, the better the estimate. We are thus naturally led to the problem of finding a $\mu$ such that $\Box_\varphi$ is $\mu$-coercive, when the eigenvalues of the complex Hessian of $\varphi$ are not comparable. In a forthcoming paper we will discuss this problem for certain classes of polynomial weights in $\C^2$ of the form 
\be\varphi_\Gamma(z_1,z_2):=\sum_{(\alpha_1,\alpha_2)\in\Gamma}|z_1|^{2\alpha_1} |z_2|^{2\alpha_2},\ee where $\Gamma$ is a finite subset of $\N^2$. Together with the present paper, this analysis will provide results that are somehow complementary to those obtained by Nagel and Pramanik \cite{nagel-pramanik-diagonal}, i.e., on-diagonal bounds for unweighted Bergman kernels on domains of the form $\Omega_\Gamma:=\{z\in\C^3\ :\ \Im(z_3)>\varphi_\Gamma(z_1,z_2)\}$ (and the analogous domains in $\C^{n+1}$, $n\geq3$). 

Another interesting aspect of this matter is the sharpness of the estimates. We speculate that proving optimal (or at least better) bounds for rather general weights may involve a generalization of the notion of $\mu$-coercivity, where $\mu$ is Hermitian matrix-valued ($\Box_\varphi$ acts on $(0,1)$-forms, or equivalently vectors, and hence it makes sense to multiply them pointwise by a matrix-valued $\mu$). This could take into account more precisely the vectorial nature of $\Box_\varphi$.

\subsection{Acknowledgements}

The present paper is part of the Ph.D. research conducted by the author at Scuola Normale Superiore in Pisa, under the supervision of Fulvio Ricci (see \cite{dallara-thesis}). The author would like to thank him for all the support, guidance and expertise offered during the last four years. The author is also very grateful to Alexander Nagel for inviting him at the university of Wisconsin, Madison, and giving him many helpful suggestions.

\section{Preliminaries}\label{prel}

In this section we introduce the most basic properties of the objects involved in our results: weighted Bergman spaces and kernels, weighted $\dbar$-problems and  Kohn Laplacians. 

We fix once and for all a \emph{weight} $\varphi:\C^n\longrightarrow\R$, which we assume to be $C^2$. Later more conditions will be imposed on it. 

\subsection{Weighted Bergman spaces and kernels}

We associate to $\varphi$ the weighted $L^2$ space $L^2(\C^n,\varphi)$ consisting of (equivalence classes of) functions $f:\C^n\rightarrow\C$ such that
\be
\int_{\C^n}|f(z)|^2e^{-2\varphi(z)}d\mathcal{L}(z)<+\infty.
\ee
We insert the factor $2$ in the exponential in order to slightly simplify several formulas later on. The Hilbert space norm and scalar product of $L^2(\C^n,\varphi)$ will be denoted by $||\cdot||_\varphi$ and $(\cdot,\cdot)_\varphi$.

The \emph{weighted Bergman space} with respect to the weight $\varphi$ is then defined as follows:\be
A^2(\C^n,\varphi):=\left\{ h:\C^n\rightarrow\C:\ h\text{ is holomorphic and } h\in L^2(\C^n,\varphi)\right\}.
\ee

If $h\in A^2(\C^n,\varphi)$, then in particular it is harmonic and satisfies the mean value property $h(z)=\frac{1}{|B(z,r)|}\int_{B(z,r)}h$. The Cauchy-Schwarz inequality yields\bel\label{berg-bound}
|h(z)|\leq \frac{1}{|B(z,r)|}\sqrt{\int_{B(z,r)}e^{2\varphi}}\ ||h||_\varphi\qquad\forall h\in A^2(\C^n,\varphi),
\eel for any $z\in\C^n$ and $r>0$. This estimate has two elementary consequences:\begin{itemize}
\item[(a)] $A^2(\C^n,\varphi)$ is a closed subspace of $L^2(\C^n,\varphi)$ (by \eqref{berg-bound} convergence of a sequence of $A^2(\C^n,\varphi)$ in the $||\cdot||_\varphi$-norm implies uniform convergence, which preserves holomorphicity). We denote by $B_\varphi$ the orthogonal projector of $L^2(\C^n,\varphi)$ onto $A^2(\C^n,\varphi)$.
\item[(b)] The evaluation mappings $h\mapsto h(z)$ are continuous linear functionals of $A^2(\C^n,\varphi)$, and Riesz Lemma yields a function $K_\varphi:\C^n\times \C^n\rightarrow\C$ such that \bel\label{berg-eq}
h(z)=\int_{\C^n}K_\varphi(z,w)h(w)e^{-2\varphi(w)}d\mathcal{L}(w)\qquad\forall z\in\C^n,
\eel and $\overline{K_\varphi(z,\cdot)}\in A^2(\C^n,\varphi)$ for every $z\in\C^n$. 
\end{itemize}
The operator $B_\varphi$ is called the \emph{weighted Bergman projector} and the function $K_\varphi$ the \emph{weighted Bergman kernel} associated to the weight $\varphi$. It is immediate to see that \be
B_\varphi(f)(z)=\int_{\C^n}K_\varphi(z,w)f(w)e^{-2\varphi(w)}d\mathcal{L}(w)\qquad\forall z\in\C^n,
\ee for every $f\in L^2(\C^n,\varphi)$, i.e., $K_\varphi$ is the integral kernel of $B_\varphi$. Since $B_\varphi$ is self-adjoint, $K_\varphi(z,w)=\overline{K_\varphi(w,z)}$. In particular $K_\varphi(\cdot,w)\in A^2(\C^n,\varphi)$ for every $w\in\C^n$, and \eqref{berg-eq} gives\bel\label{berg-diag}
K_\varphi(z,z)=\int_{\C^n}|K(z,w)|^2e^{-2\varphi(w)}d\mathcal{L}(w)=\sup_{h\in A^2(\C^n,\varphi),\ ||h||_\varphi=1} |h(z)|^2, \eel where the last term is the operator norm squared of the evaluation functional. Identity \eqref{berg-diag} is the main route to pointwise estimates of the diagonal values of the weighted Bergman kernel. Unfortunately there is not an equally neat variational characterization of non-diagonal values.

\subsection{Weighted $\dbar$-problems}

We begin by recalling the classical formalism of the $\dbar$ complex. We denote by $L^2_{(0,q)}(\C^n,\varphi)$ the Hilbert space of $(0,q)$-forms with coefficients in $L^2(\C^n,\varphi)$.   
Since we will be working only with forms of degree less than or equal to $2$, we confine our discussion to these cases. Adopting the standard notation for differential forms, we have that $L^2_{(0,0)}(\C^n,\varphi)=L^2(\C^n,\varphi)$,\be
L^2_{(0,1)}(\C^n,\varphi):=\left\{u=\sum_{1\leq j\leq n} u_j d\overline{z}_j:\ u_j\in L^2(\C^n,\varphi)\quad\forall j\right\},
\ee and
\be
L^2_{(0,2)}(\C^n,\varphi):=\left\{w=\sum_{1\leq j<k\leq n} w_{jk}\ d\overline{z}_j\wedge d\overline{z}_k:\ w_{jk}\in L^2(\C^n,\varphi)\quad\forall j,k\right\}.
\ee
For the norms and the scalar products in these Hilbert spaces of forms we use the same symbols $||\cdot||_\varphi$ and $(\cdot,\cdot)_\varphi$, i.e., if $u,\widetilde{u}\in L^2_{(0,1)}(\C^n,\varphi)$, we have\be
||u||_\varphi^2=\sum_{1\leq j\leq n}||u_j||_\varphi^2,\quad (u,\widetilde{u})_\varphi=\sum_{1\leq j\leq n}(u_j,\widetilde{u}_j)_\varphi,
\ee while if $w,\widetilde{w}\in L^2_{(0,2)}(\C^n,\varphi)$, we have\be
||w||_\varphi^2=\sum_{1\leq j<k\leq n}||w_{jk}||_\varphi^2,\quad (w,\widetilde{w})_\varphi=\sum_{1\leq j<k\leq n}(w_{jk},\widetilde{w}_{jk})_\varphi.
\ee 
The meaning of $||\cdot||_\varphi$ and $(\cdot,\cdot)_\varphi$ depends on whether the arguments are functions, $(0,1)$-forms or $(0,2)$-forms, but this ambiguity should not be a source of confusion. Observe that the formulas above reveal the nature of product Hilbert space of $L^2_{(0,q)}(\C^n,\varphi)$. 

We now introduce the initial fragment of the \emph{weighted $\dbar$-complex}:
\bel\label{dbar-complex}
L^2(\C^n,\varphi)\stackrel{\dbar}\longrightarrow L^2_{(0,1)}(\C^n,\varphi)\stackrel{\dbar}\longrightarrow L^2_{(0,2)}(\C^n,\varphi).
\eel
The symbol $\dbar$ denotes as usual both the operator $\dbar:L^2(\C^n,\varphi)\rightarrow L^2_{(0,1)}(\C^n,\varphi)$ defined on the domain \be
\mathcal{D}_0(\dbar):=\left\{f\in L^2(\C^n,\varphi):\ \frac{\partial f}{\partial \overline{z}_j}\in L^2(\C^n,\varphi)\ \forall j\right\}
\ee by the formula $\dbar f=\sum_j\frac{\partial f}{\partial \overline{z}_j}d\overline{z}_j$, and the operator $\dbar: L^2_{(0,1)}(\C^n,\varphi)\rightarrow L^2_{(0,2)}(\C^n,\varphi)$ defined on the domain \be
\mathcal{D}_1(\dbar):=\left\{u=\sum_ju_jd\overline{z}_j\in L^2_{(0,1)}(\C^n,\varphi):\ \frac{\partial u_k}{\partial \overline{z}_j}-\frac{\partial u_j}{\partial \overline{z}_k}\in L^2(\C^n,\varphi)\ \forall j,k\right\}
\ee by the formula $\dbar u=\sum_{j<k}\left(\frac{\partial u_k}{\partial \overline{z}_j}-\frac{\partial u_j}{\partial \overline{z}_k}\right)d\overline{z}_j\wedge d\overline{z}_k$. 

The weighted $\dbar$-complex \eqref{dbar-complex} is a complex, i.e., \bel\label{dbar-squared}
\dbar f\in \mathcal{D}_1(\dbar) \quad \text{and}\quad \dbar\dbar f=0\qquad\forall f\in \mathcal{D}_0(\dbar),
\eel and the kernel of $\dbar:L^2(\C^n,\varphi)\rightarrow L^2_{(0,1)}(\C^n,\varphi)$ is of course $A^2(\C^n,\varphi)$.

Therefore, if $u\in L^2_{(0,1)}(\C^n,\varphi)$ is $\dbar$-closed, i.e., $\dbar u=0$, the \emph{weighted $\dbar$-problem}\bel\label{dbar-prob}
\dbar f=u
\eel admits at most one solution $f\in \mathcal{D}_0(\dbar)$ orthogonal to $A^2(\C^n,\varphi)$. In case it exists, it is called the \emph{canonical solution} to \eqref{dbar-prob}.

\subsection{Weighted Kohn Laplacians}

Taking the Hilbert space adjoints of the operators in \eqref{dbar-complex} (as we can, since the operators are closed and densely defined), we have the dual complex:
\bel\label{kohn-dual}
L^2(\C^n,\varphi)\stackrel{\dbar^*_\varphi}\longleftarrow L^2_{(0,1)}(\C^n,\varphi)\stackrel{\dbar^*_\varphi}\longleftarrow L^2_{(0,2)}(\C^n,\varphi).
\eel

We use the index $\varphi$ in the symbols for these operators to stress the fact that not only the domain, but also the formal expression of $\dbar^*_\varphi$ depends on the weight $\varphi$. In particular, on $(0,1)$-forms the expression is\bel\label{dbarstar}
\dbar^*_\varphi \left(\sum_{j=1}^nu_jd\overline{z}_j\right)=\sum_{j=1}^n\left(-\partial_ju_j+\partial_j\varphi\cdot u_j\right).
\eel

The \emph{weighted Kohn Laplacian} is defined by the formula
\be
\Box_\varphi:=\dbar^*_\varphi\dbar+\dbar\dbar^*_\varphi
\ee 
on the domain of $(0,1)$-forms \be
\mathcal{D}(\Box_\varphi):=\{u\in L^2_{(0,1)}(\C^n,\varphi):\ u\in\mathcal{D}_1(\dbar)\cap\mathcal{D}_1(\dbar^*_\varphi),\ \dbar u\in \mathcal{D}_2(\dbar^*_\varphi)\text{ and }\dbar^* _\varphi u\in \mathcal{D}_0(\dbar)\}.
\ee 

The weighted Kohn Laplacian is a densely-defined, closed, self-adjoint and non-negative operator on $L^2_{(0,1)}(\C^n,\varphi)$. The details of the routine arguments proving this fact can be found in \cite{haslinger-book} (or in \cite{chen-shaw} for the very similar unweighted case).

Finally, let us introduce the quadratic form \be
\mathcal{E}_\varphi(u,v):=(\dbar u,\dbar v)_\varphi+(\dbar^*_\varphi u,\dbar^*_\varphi v)_\varphi,
\ee defined for $u,v\in\mathcal{D}(\mathcal{E}_\varphi):=\mathcal{D}_1(\dbar)\cap\mathcal{D}_1(\dbar^*_\varphi)$. Notice that, by definition of Hilbert space adjoints, \be
(\Box_\varphi u,v)=\mathcal{E}_\varphi(u,v) \qquad \forall u\in \mathcal{D}(\Box_\varphi),\quad \forall v\in \mathcal{D}(\mathcal{E}_\varphi).
\ee
We will simply write $\mathcal{E}_\varphi(u)$ for $\mathcal{E}_\varphi(u,u)$. The well-known Morrey-Kohn-H\"ormander formula gives an alternative expression for $\mathcal{E}_\varphi(u)$. In order to state it, we define the \emph{complex Hessian} $H_\varphi=\left(\frac{\partial^2\varphi}{\partial z_j\partial \overline{z}_k}\right)_{j,k=1}^n$, a continuous mapping defined on $\C^n$ and whose values are $n\times n$ Hermitian matrices. We also identify the $(0,1)$-form $u=\sum_{j=1}^n u_jd\overline{z}_j$ with the vector field $u=(u_1,\dots,u_n):\C^n\rightarrow\C^n$, so that $(H_\varphi u,u)=\sum_{j,k=1}^n\frac{\partial^2\varphi}{\partial z_j\partial \overline{z}_k}u_j\overline{u}_k$. 
The Morrey-Kohn-H\"ormander formula is the following identity:
\bel\label{MKH}
\mathcal{E}_\varphi(u)=\sum_{j,k}\int_{\C^n}|\dbar_k u_j|^2e^{-2\varphi}+2\int_{\C^n} (H_\varphi u,u)e^{-2\varphi}\qquad\forall u\in\mathcal{D}(\mathcal{E}_\varphi).
\eel 
A proof may be found in \cite{haslinger-book} or \cite{chen-shaw}. Identity \eqref{MKH} reveals the fundamental role played by $H_\varphi$ in the analysis of $\Box_\varphi$. In particular, in view of \eqref{MKH} it is very natural and useful to assume that the weight $\varphi$ be \emph{plurisubharmonic}, i.e., 
\bel\label{plush}
(H_\varphi(z)v,v)\geq0\qquad\forall z\in\C^n,\quad v\in\C^n.
\eel

\subsection{A useful identity and a Caccioppoli-type inequality involving $\mathcal{E}_\varphi$}

\begin{prop}\label{MKH-computation}
Assume that $u\in \mathcal{D}(\Box_\varphi)$ and let $\eta$ be a real-valued bounded Lipschitz function. Then $\eta u\in \mathcal{D}(\mathcal{E}_\varphi)$ and 
\be 
\mathcal{E}_\varphi(\eta u)=\frac{1}{4}\int_{\C^n}|\nabla \eta|^2|u|^2e^{-2\varphi} + \Re(\eta\Box_\varphi u,\eta u)_\varphi.
\ee
\end{prop}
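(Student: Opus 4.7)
The plan is to expand $\mathcal{E}_\varphi(\eta u)$ directly using the Leibniz rules for $\dbar$ and for $\dbar^*_\varphi$ (in the form \eqref{dbarstar}), and to organize the resulting terms into three groups: squares of derivatives of $\eta$ (which will produce $\frac{1}{4}|\nabla\eta|^2 |u|^2$), squares of derivatives of $u$ multiplied by $\eta^2$, and mixed cross terms. The cross terms and the $\eta^2$-terms will together be recognized as $\Re(\Box_\varphi u, \eta^2 u)_\varphi = \Re(\eta\Box_\varphi u, \eta u)_\varphi$, via the identity $(\Box_\varphi u, v)_\varphi = \mathcal{E}_\varphi(u,v)$ applied to $v = \eta^2 u$.

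First I would verify that $\eta u \in \mathcal{D}(\mathcal{E}_\varphi)$, which follows from boundedness of $\eta$ (to stay in $L^2$) together with the Leibniz formulas, since $\nabla\eta \in L^\infty$; the same argument applied to $\eta^2$ (also bounded Lipschitz) ensures $\eta^2 u \in \mathcal{D}(\mathcal{E}_\varphi)$, so the pairing $(\Box_\varphi u, \eta^2 u)_\varphi = \mathcal{E}_\varphi(u, \eta^2 u)$ is legitimate. Writing $a_j := \partial_j \eta$ (so $\partial_{\bar z_j}\eta = \overline{a_j}$ since $\eta$ is real), I would record the two pointwise identities
\begin{equation*}
\dbar(\eta u) = \eta\,\dbar u + \dbar\eta \wedge u, \qquad \dbar^*_\varphi(\eta u) = \eta\,\dbar^*_\varphi u - \sum_j a_j u_j.
\end{equation*}

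The heart of the computation is a pointwise algebraic lemma: for any vector $u \in \C^n$ and any $a \in \C^n$,
\begin{equation*}
\sum_{j<k}|\overline{a_j}u_k - \overline{a_k}u_j|^2 + \Bigl|\sum_j a_j u_j\Bigr|^2 = \Bigl(\sum_j |a_j|^2\Bigr)|u|^2,
\end{equation*}
after which one notes that $\sum_j |a_j|^2 = \sum_j|\partial_j\eta|^2 = \tfrac{1}{4}|\nabla\eta|^2$ for real-valued $\eta$. Expanding $\|\dbar(\eta u)\|_\varphi^2 + \|\dbar^*_\varphi(\eta u)\|_\varphi^2$, the purely derivative-of-$\eta$ part is exactly the left-hand side of this identity integrated against $e^{-2\varphi}$, producing the $\frac{1}{4}\int|\nabla\eta|^2|u|^2 e^{-2\varphi}$ contribution, while the purely derivative-of-$u$ part gives $\|\eta\dbar u\|_\varphi^2 + \|\eta \dbar^*_\varphi u\|_\varphi^2$, and the remaining contribution is the sum of the two cross terms.

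To collect the remaining pieces into $\Re(\eta \Box_\varphi u, \eta u)_\varphi$, I would apply $\mathcal{E}_\varphi(u,v) = (\Box_\varphi u, v)_\varphi$ with $v = \eta^2 u$. Using the same Leibniz formulas on $\dbar(\eta^2 u)$ and $\dbar^*_\varphi(\eta^2 u)$ (which produce a factor of $2\eta$ in the $\eta$-derivative terms), one gets
\begin{equation*}
\Re(\Box_\varphi u, \eta^2 u)_\varphi = \|\eta\dbar u\|_\varphi^2 + \|\eta\dbar^*_\varphi u\|_\varphi^2 + 2\Re\bigl[(\dbar u, \eta\,\dbar\eta\wedge u)_\varphi - (\dbar^*_\varphi u, \eta \textstyle\sum_j a_j u_j)_\varphi\bigr],
\end{equation*}
and the right-hand side is precisely $\|\eta\dbar u\|_\varphi^2 + \|\eta\dbar^*_\varphi u\|_\varphi^2 + (\text{cross terms from the expansion})$, since $\eta$ is real. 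Combining the two computations yields the claimed identity. The main bookkeeping obstacle is the pointwise algebraic cancellation above; everything else is standard Hilbert-space manipulation enabled by the boundedness and Lipschitz character of $\eta$.
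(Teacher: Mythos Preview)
Your argument is correct, but it takes a different route from the paper's. The paper does not work directly with the definition $\mathcal{E}_\varphi(\eta u)=\|\dbar(\eta u)\|_\varphi^2+\|\dbar^*_\varphi(\eta u)\|_\varphi^2$; instead it applies the Morrey--Kohn--H\"ormander formula \eqref{MKH}, which rewrites the energy as $\sum_{j,k}\int|\dbar_k(\eta u_j)|^2e^{-2\varphi}+2\int(H_\varphi\eta u,\eta u)e^{-2\varphi}$. On each scalar term one then has the elementary identity
\[
|\dbar_k(\eta u_j)|^2=|\dbar_k\eta|^2|u_j|^2+\Re\bigl(\dbar_k u_j\,\overline{\dbar_k(\eta^2 u_j)}\bigr),
\]
and after summing and using the polarized MKH formula the right-hand side becomes $\frac{1}{4}\int|\nabla\eta|^2|u|^2e^{-2\varphi}+\Re\,\mathcal{E}_\varphi(u,\eta^2 u)$ in one stroke. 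Your approach avoids MKH entirely and instead relies on the Lagrange-type identity $\sum_{j<k}|\overline{a_j}u_k-\overline{a_k}u_j|^2+|\sum_j a_j u_j|^2=|a|^2|u|^2$ to combine the $\dbar\eta\wedge u$ and $\sum_j a_j u_j$ contributions. The paper's route is shorter because MKH has already absorbed the anti-symmetrization that your Lagrange identity handles by hand; your route is more self-contained in that it needs only the Leibniz rules and not the MKH integration-by-parts machinery. Both arrive at $\Re(\Box_\varphi u,\eta^2 u)_\varphi$ via $\mathcal{E}_\varphi(u,\eta^2 u)$ in the same way.
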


\begin{proof}
We omit the easy verification that $\eta u\in \mathcal{D}(\mathcal{E}_\varphi)$. Then we have: \bee
|\dbar_k(\eta u_j)|^2&=&|\dbar_k\eta|^2 |u_j|^2+\Re\left(\eta^2|\dbar_ku_j|^2+2\eta\partial_k\eta \cdot\overline{u_j}\dbar_k u_j\right)\\
&=&|\dbar_k\eta|^2 |u_j|^2+\Re\left(\dbar_k u_j(\eta^2\partial_k\overline{u_j}+\partial_k(\eta^2) \overline{u_j})\right)\\
&=&|\dbar_k\eta|^2  |u_j|^2+\Re\left(\dbar_k u_j\overline{\dbar_k(\eta^2u_j)}\right).
\eee 
Integrating this identity and using the polarized version of the Morrey-Kohn-H\"ormander formula we obtain\bee
\mathcal{E}_\varphi(\eta u)&=&\frac{1}{4}\int_{\C^n}|\nabla \eta|^2|u|^2e^{-2\varphi}+\Re\left(\sum_{j,k=1}^n\int_{\C^n}\dbar_k u_j\overline{\dbar_k(\eta^2u_j)}e^{-2\varphi}\right) \\
&+& 2\Re\left(\int_{\C^n}(H_\varphi u,\eta^2 u)e^{-2\varphi}\right)\\
&=&\frac{1}{4}\int_{\C^n}|\nabla \eta|^2|u|^2e^{-2\varphi}+\Re\left(\mathcal{E}_\varphi(u,\eta^2u)\right).
\eee Since $\eta^2u\in \mathcal{D}(\mathcal{E}_\varphi)$ and $u\in \mathcal{D}(\Box_\varphi)$, we have $\mathcal{E}_\varphi(u,\eta^2u)=(\Box_\varphi u,\eta^2u)_\varphi=(\eta\Box_\varphi u,\eta u)_\varphi$. This concludes the proof. 
\end{proof}

We now state and prove the Caccioppoli-type inequality.

\begin{lem}\label{MKH-caccioppoli}
Assume that $u\in \mathcal{D}(\Box_\varphi)$ and that $\Box_\varphi u$ vanishes on $B(z,R)$. If $r<R$, then\be
\int_{B(z,r)}|\dbar^*_\varphi u|^2e^{-2\varphi}\leq (R-r)^{-2} \int_{B(z,R)}|u|^2e^{-2\varphi}.
\ee 
\end{lem}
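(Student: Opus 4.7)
The plan is to apply Proposition \ref{MKH-computation} to $u$ with a carefully chosen Lipschitz cutoff $\eta$, and then extract pointwise information about $\dbar^*_\varphi u$ from the energy bound on $\eta u$.

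Concretely, I would take the standard radial Lipschitz cutoff
\be
\eta(w):=\min\bigl(1,\max\bigl(0,(R-|w-z|)/(R-r)\bigr)\bigr),
\ee
so that $\eta\equiv1$ on $B(z,r)$, $\eta\equiv0$ outside $B(z,R)$, and $|\nabla\eta|\leq(R-r)^{-1}$ almost everywhere. Since $\eta$ is supported in $B(z,R)$ and $\Box_\varphi u$ vanishes there by hypothesis, the cross-term $\Re(\eta\Box_\varphi u,\eta u)_\varphi$ in Proposition \ref{MKH-computation} is zero, so
\be
\mathcal{E}_\varphi(\eta u)=\tfrac14\int_{\C^n}|\nabla\eta|^2|u|^2 e^{-2\varphi}\leq \tfrac{1}{4(R-r)^2}\int_{B(z,R)}|u|^2 e^{-2\varphi}.
\ee

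Next, by definition of $\mathcal{E}_\varphi$ we have $\|\dbar^*_\varphi(\eta u)\|_\varphi^2\leq\mathcal{E}_\varphi(\eta u)$. The remaining step is to check that $\dbar^*_\varphi(\eta u)$ and $\dbar^*_\varphi u$ agree on $B(z,r)$. Using the explicit expression \eqref{dbarstar} together with the product rule (which is justified in $L^2$ because $\eta$ is bounded Lipschitz and $u\in\mathcal{D}(\dbar^*_\varphi)$, so both $\eta\dbar^*_\varphi u$ and $(\partial_j\eta)u_j$ lie in $L^2$), one obtains the pointwise identity
\be
\dbar^*_\varphi(\eta u)=\eta\,\dbar^*_\varphi u-\sum_{j=1}^n(\partial_j\eta)u_j.
\ee
Since $\eta\equiv1$ on $B(z,r)$ (and hence $\partial_j\eta=0$ there), the second term vanishes and $\dbar^*_\varphi(\eta u)=\dbar^*_\varphi u$ on $B(z,r)$.

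Combining these observations yields
\be
\int_{B(z,r)}|\dbar^*_\varphi u|^2 e^{-2\varphi}=\int_{B(z,r)}|\dbar^*_\varphi(\eta u)|^2 e^{-2\varphi}\leq\|\dbar^*_\varphi(\eta u)\|_\varphi^2\leq \tfrac{1}{4(R-r)^2}\int_{B(z,R)}|u|^2 e^{-2\varphi},
\ee
which is even slightly stronger than the stated bound. There is no real obstacle here: the argument is a standard cutoff-and-integrate-by-parts of Caccioppoli type. The only point requiring a moment of care is the $L^2$ justification of the Leibniz rule for $\dbar^*_\varphi$ with a merely Lipschitz cutoff, which follows from the boundedness of $\eta$ and $\nabla\eta$ once one knows $u\in\mathcal{D}(\Box_\varphi)\subseteq\mathcal{D}(\dbar^*_\varphi)$.
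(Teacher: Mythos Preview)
Your proof is correct and follows essentially the same approach as the paper: apply Proposition~\ref{MKH-computation} with a radial Lipschitz cutoff $\eta$ supported in $B(z,R)$ and equal to $1$ on $B(z,r)$, then use the Leibniz rule $\dbar^*_\varphi(\eta u)=\eta\,\dbar^*_\varphi u-\sum_j(\partial_j\eta)u_j$. The only difference is that the paper bounds $\|\eta\,\dbar^*_\varphi u\|_\varphi$ via the triangle inequality, whereas you observe directly that $\dbar^*_\varphi(\eta u)=\dbar^*_\varphi u$ on $B(z,r)$, which is slightly cleaner and yields the sharper constant $\tfrac{1}{4}(R-r)^{-2}$.
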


\begin{proof} Let $\eta$ be Lipschitz, real-valued, identically equal to $1$ on $B(z,r)$, and supported on $B(z,R)$. Since $\eta\Box_\varphi u=0$, Proposition \ref{MKH-computation} yields\be
||\dbar^*_\varphi (\eta u)||_\varphi^2\leq \mathcal{E}_\varphi(\eta u)=\frac{1}{4}\int_{\C^n}|\nabla \eta|^2|u|^2e^{-2\varphi}\leq \frac{||\nabla \eta||_\infty^2}{4}||\chi_{B(z,R)} u||_\varphi^2.
\ee
Since $\dbar^*_\varphi(\eta u)=\eta\dbar^*_\varphi u - \sum_{j=1}^n\partial_j\eta\cdot u_j$ (recall \eqref{dbarstar}), we have\bee
||\chi_{B(z,r)} \dbar^*_\varphi u||_\varphi&\leq& ||\eta\dbar^*_\varphi u||_\varphi\\
&\leq& ||\dbar^*_\varphi (\eta u)||_\varphi+\frac{||\nabla\eta||_\infty}{2}||\chi_{B(z,R)} u||_\varphi\\
&\leq &||\nabla\eta||_\infty||\chi_{B(z,R)} u||_\varphi.
\eee It is clear that we can choose $\eta$ such that $||\nabla\eta||_\infty=\frac{1}{R-r}$, and this gives the thesis.\end{proof}

\section{$\mu$-coercivity for weighted Kohn Laplacians}\label{coerc-sec}

\begin{dfn}\label{coerc-def}
Given a measurable function $\mu:\C^n\rightarrow[0,+\infty)$, we say that $\Box_\varphi$ is \emph{$\mu$-coercive} if the following inequality holds
\bel\label{coerc-formula}
\mathcal{E}_\varphi(u)\geq ||\mu u||_\varphi^2 \qquad\forall u\in \mathcal{D}(\mathcal{E}_\varphi).
\eel
\end{dfn}

The next proposition collects a few basic facts about $\mu$-coercivity. 

\begin{prop}\label{coerc-prop} Assume that $\Box_\varphi$ is $\mu$-coercive, and that 
\bel\label{coerc-inf}\inf_{z\in\C^n}\mu(z)>0.\eel
Then:
\begin{enumerate}
\item[\emph{(i)}] $\Box_\varphi$ has a bounded, self-adjoint and non-negative inverse $N_\varphi$ such that \bel\label{coerc-neumann-bound}
||\mu N_\varphi g||_\varphi\leq ||\mu^{-1} g||_\varphi \qquad\forall g\in L^2_{(0,1)}(\C^n,\varphi).
\eel
\item[\emph{(ii)}] The weighted $\dbar$ equation is solvable, and $\dbar^*_\varphi N_\varphi$ is the canonical solution operator of the weighted $\dbar$-problem, i.e., if $u\in L^2_{(0,1)}(\C^n,\varphi)$ is $\dbar$-closed, then $f:=\dbar^*_\varphi N_\varphi u$ is such that $\dbar f=u$ and $f$ is orthogonal to $A^2(\C^n,\varphi)$. Moreover, \bel\label{coerc-canonical-bound}
||f||_\varphi\leq ||\mu^{-1} u||_\varphi.
\eel 
\item[\emph{(iii)}] We have the identity
\bel\label{coerc-bergman} B_\varphi f= f-\dbar^*_\varphi N_\varphi \dbar f\qquad \forall f\in \mathcal{D}_0(\dbar).\eel
\end{enumerate}
\end{prop}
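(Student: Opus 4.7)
The plan is to establish the three parts in order. Part (i) is an application of standard spectral theory to the coercive self-adjoint operator $\Box_\varphi$; part (ii) upgrades the abstract inverse $N_\varphi$ to a canonical solution operator for $\dbar$ by a short distributional argument; part (iii) is immediate from (ii) via orthogonal decomposition.

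For (i), hypothesis \eqref{coerc-inf} combined with \eqref{coerc-formula} gives $\mathcal{E}_\varphi(u) \geq c^2 \|u\|_\varphi^2$ with $c := \inf_{\C^n}\mu > 0$. Since $\Box_\varphi$ is self-adjoint and non-negative with $(\Box_\varphi u, u)_\varphi = \mathcal{E}_\varphi(u)$ on its domain, its spectrum is contained in $[c^2, +\infty)$, which yields a bounded, self-adjoint, non-negative inverse $N_\varphi := \Box_\varphi^{-1}$. The weighted bound \eqref{coerc-neumann-bound} follows by applying the coercivity inequality to $N_\varphi g$ and Cauchy--Schwarz:
$$\|\mu N_\varphi g\|_\varphi^2 \leq \mathcal{E}_\varphi(N_\varphi g) = (g, N_\varphi g)_\varphi = (\mu^{-1}g, \mu N_\varphi g)_\varphi \leq \|\mu^{-1} g\|_\varphi \|\mu N_\varphi g\|_\varphi,$$
after which one divides (the inequality being vacuously true when $\mu^{-1}g \notin L^2$).

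For (ii), let $u$ be a $\dbar$-closed element of $L^2_{(0,1)}(\C^n, \varphi)$, set $f := \dbar^*_\varphi N_\varphi u$, and define $\phi := u - \dbar f$. From $\Box_\varphi N_\varphi u = u$ one obtains $\phi = \dbar^*_\varphi \dbar N_\varphi u$, which is meaningful because $N_\varphi u \in \mathcal{D}(\Box_\varphi)$ gives $\dbar N_\varphi u \in \mathcal{D}_2(\dbar^*_\varphi)$. Differentiating distributionally and invoking $\dbar u = 0$ together with \eqref{dbar-squared} shows $\dbar \phi = 0$, so $\phi \in \mathcal{D}_1(\dbar) \cap \ker \dbar$. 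The adjoint relation then gives
$$\|\phi\|_\varphi^2 = (\dbar^*_\varphi \dbar N_\varphi u, \phi)_\varphi = (\dbar N_\varphi u, \dbar \phi)_\varphi = 0,$$
forcing $\phi = 0$, i.e., $\dbar f = u$. The range of $\dbar^*_\varphi$ is orthogonal to $\ker \dbar = A^2(\C^n, \varphi)$, so $f$ is the canonical solution. The bound \eqref{coerc-canonical-bound} comes from
$$\|f\|_\varphi^2 \leq \mathcal{E}_\varphi(N_\varphi u) = (u, N_\varphi u)_\varphi \leq \|\mu^{-1} u\|_\varphi \|\mu N_\varphi u\|_\varphi \leq \|\mu^{-1} u\|_\varphi^2,$$
the last step invoking \eqref{coerc-neumann-bound}.

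For (iii), given $f \in \mathcal{D}_0(\dbar)$, the form $\dbar f$ is $\dbar$-closed by \eqref{dbar-squared}, so $g := \dbar^*_\varphi N_\varphi \dbar f$ is the canonical solution of $\dbar g = \dbar f$ by (ii). Thus $f - g \in \ker \dbar = A^2(\C^n, \varphi)$ while $g \perp A^2(\C^n, \varphi)$, so $f - g = B_\varphi f$, which yields \eqref{coerc-bergman}. The main delicate point is the distributional manipulation inside (ii) together with the justification that the integration-by-parts of $\dbar^*_\varphi$ against $\phi$ is legal; both follow from the definition of $\mathcal{D}(\Box_\varphi)$ recalled in the preliminaries, so no serious obstacle is expected.
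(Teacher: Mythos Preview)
Your proposal is correct and follows essentially the same route as the paper. The only notable difference is in part (i): the paper constructs $N_\varphi$ by hand via the Riesz representation of the functional $\Box_\varphi u\mapsto (g,u)_\varphi$ (first proving $\|\mu u\|_\varphi\leq\|\mu^{-1}\Box_\varphi u\|_\varphi$ to make that functional bounded), whereas you invoke the spectral theorem to get $N_\varphi$ and then obtain \eqref{coerc-neumann-bound} directly from $\|\mu N_\varphi g\|_\varphi^2\leq\mathcal{E}_\varphi(N_\varphi g)=(g,N_\varphi g)_\varphi$; parts (ii) and (iii) match the paper's argument almost line for line.
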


The operator $N_\varphi$ is customarily called the \emph{$\dbar$-Neumann operator}. 

\begin{proof}

(i) To see that $\Box_\varphi$ is injective, observe that, if $\Box_\varphi u=0$, inequality \eqref{coerc-formula} implies that $mu=0$ and hence, by \eqref{coerc-inf}, that $u=0$. By self-adjointness, $\Box_\varphi$ has dense range. If $g\in L^2_{(0,1)}(\C^n,\varphi)$, the anti-linear functional $\lambda_g:\Box_\varphi u\mapsto (g,u)_\varphi$ is then well-defined on a dense subset of $L^2_{(0,1)}(\C^n,\varphi)$. It satisfies the bound\bel\label{coerc-lambda}
|\lambda_g(\Box_\varphi u)|=|(g,u)_\varphi|\leq ||\mu^{-1} g||_\varphi||\mu u||_\varphi.
\eel
An application of the Cauchy-Schwarz inequality shows that $\mu$-coercivity implies, for any $u\in\mathcal{D}(\Box_\varphi)$, \be
||\mu u||_\varphi^2\leq \mathcal{E}_\varphi(u)=(\Box_\varphi u,u)_\varphi\leq ||\mu^{-1}\Box_\varphi u||_\varphi||\mu u||_\varphi,
\ee i.e., $||\mu u||_\varphi\leq||\mu^{-1}\Box_\varphi u||_\varphi$ ($||\mu u||_\varphi$ is finite for any $u\in\mathcal{D}(\Box_\varphi)$ by $\mu$-coercivity).
Plugging this inequality into \eqref{coerc-lambda}, we obtain \bel\label{coerc-existence}
|\lambda_g(\Box_\varphi u)|\leq ||\mu^{-1} g||_\varphi||\mu^{-1}\Box_\varphi u||_\varphi .
\eel
Since $\mu^{-1}$ is bounded, $\lambda_g$ may be uniquely extended to a continuous anti-linear functional on $L^2_{(0,1)}(\C^n,\varphi)$ and hence there exists $N_\varphi g\in L^2_{(0,1)}(\C^n,\varphi)$ such that $(u,g)_\varphi=(\Box_\varphi u,N_\varphi g)_\varphi$ for every $u\in \mathcal{D}(\Box_\varphi)$. This means that $N_\varphi g\in\mathcal{D}(\Box_\varphi)$ and that $\Box_\varphi N_\varphi g=g$. In particular $\Box_\varphi$ is surjective and $N_\varphi$, being the inverse of $\Box_\varphi$, is a bounded, self-adjoint, and non-negative operator. Inequality \eqref{coerc-neumann-bound} follows from \eqref{coerc-existence}:\be
||\mu N_\varphi g||_\varphi=\sup_{||w||_\varphi=1}|(\mu N_\varphi g,w)_\varphi|=\sup_{||w||_\varphi=1,||\mu w||_\varphi<+\infty}|\lambda_g(\mu w)|\leq ||\mu^{-1} g||_\varphi.
\ee In the second identity we used the fact that $\mu w\in L^2_{(0,1)}(\C^n,\varphi)$ for $w$ in a dense subspace as a consequence of $\mu$-coercivity.

(ii) Let $u$ and $f$ be as in the statement. We compute\be
\dbar f = \dbar\dbar^*_\varphi N_\varphi u= \Box_\varphi N_\varphi u-\dbar^*_\varphi \dbar N_\varphi u=u-\dbar^*_\varphi \dbar N_\varphi u.
\ee Notice that $N_\varphi u\in \mathcal{D}(\Box_\varphi)$ and hence the computation is meaningful. Since $u$ and $\dbar f$ are both $\dbar$-closed, the identity above implies that $\dbar^*_\varphi \dbar N_\varphi u\in \mathcal{D}_1(\dbar)$ and $\dbar\dbar^*_\varphi \dbar N_\varphi u=0$. In particular \be
0=(\dbar\dbar^*_\varphi \dbar N_\varphi u,\dbar N_\varphi u)_\varphi=||\dbar^*_\varphi \dbar N_\varphi u||_\varphi^2
\ee and hence $\dbar f=u$. Notice that one can repeat the above argument to show that $\dbar N_\varphi u=0$, but we don't need this fact. The solution $f$ is obviously orthogonal to $A^2(\C^n,\varphi)$, because it is in the range of $\dbar^*_\varphi$. To obtain the bound on $f$ we observe that\bee
||f||_\varphi^2&=&(\dbar^*_\varphi N_\varphi u,f)_\varphi=(N_\varphi u,\dbar f)_\varphi\\
&=&(N_\varphi u,u)_\varphi=(\mu N_\varphi u,\mu^{-1}u)_\varphi\leq ||\mu^{-1} u||_\varphi^2,
\eee where the last inequality is \eqref{coerc-neumann-bound}.

(iii) Let $f\in\mathcal{D}_0(\dbar)$. Since $\dbar f$ is $\dbar$-closed, part \emph{(ii)} shows that $\dbar^*_\varphi N_\varphi \dbar f$ is orthogonal to $A^2(\C^n,\varphi)$ and that $f-\dbar^*_\varphi N_\varphi \dbar f\in A^2(\C^n,\varphi)$. Hence $B_\varphi f=f-\dbar^*_\varphi N_\varphi \dbar f$.\end{proof}

\section{Radius functions and associated distances}\label{radius-sec}

\subsection{Definitions and basic properties}

We say that $\rho:\R^d\rightarrow (0,+\infty)$ is a \emph{radius function} if it is Borel and there exists a constant $C<+\infty$ such that for every $x\in\R^d$ we have\bel\label{radius-ineq}
C^{-1}\rho(x)\leq \rho(y)\leq C\rho(x) \qquad \forall y\in B(x,\rho(x)).
\eel

In other words, a radius function $\rho$ is approximately constant on the ball centered at $x$ of radius $\rho(x)$. 

To any radius function $\rho$, we associate the Riemannian metric $\rho(x)^{-2}dx^2$. In fact, we are interested only in the associated Riemannian distance, which we describe explicitly. If $I$ is a compact interval and $\gamma:I\rightarrow\R^d$ is a piecewise $C^1$ curve, we define\be
L_\rho(\gamma):=\int_I\frac{|\gamma'(t)|}{\rho(\gamma(t))}dt.
\ee 
Notice that the integrand $\frac{|\gamma'(t)|}{\rho(\gamma(t))}$ is defined on the complement of the finite set of times where $\gamma'$ is discontinuous, and it is a measurable function, because $\rho$ is assumed to be Borel. Moreover, the integral is absolutely convergent because $\rho^{-1}$ is locally bounded. 

Given $x,y\in\R^d$, we put\be
d_\rho(x,y):=\inf_\gamma L_\rho(\gamma),
\ee where the $\inf$ is taken as $\gamma$ varies over the collection of curves connecting $x$ and $y$. Finally, we define $B_\rho(x,r):=\{y\in\R^d:\ d_\rho(x,y)<r\}$. 

\begin{prop}\label{radius-prop} \par The function $d_\rho$ just defined is a distance and \be
B_\rho(x,C^{-1}r)\subseteq B(x,r\rho(x))\subseteq B_\rho(x,Cr) \qquad\forall r\leq1,\ x\in\R^d,
\ee where $C$ is the constant appearing in \eqref{radius-ineq}. Moreover, the function \be
y\mapsto d_\rho(x,y)\ee 
is locally Lipschitz for every $x$, and for almost every $y\in \R^d$ we have \bel\label{radius-nabla}
|\nabla_y d_\rho(x,y)|\leq \frac{C}{\rho(y)}.
\eel
\end{prop}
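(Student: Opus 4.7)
My plan is to verify each claim using the slow-variation property \eqref{radius-ineq} as the main tool.

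First, the distance axioms. Symmetry follows by reversing parametrizations, the triangle inequality by concatenating curves, and $d_\rho(x,x)=0$ from a constant curve. For positivity when $y\neq x$, I would use the dichotomy that any piecewise $C^1$ curve $\gamma$ from $x$ to $y$ either stays inside $B(x,\rho(x))$ — in which case its euclidean length is at least $|y-x|$ and, since $\rho\leq C\rho(x)$ there, $L_\rho(\gamma)\geq |y-x|/(C\rho(x))$ — or it exits, in which case the portion of $\gamma$ inside $B(x,\rho(x))$ has euclidean length at least $\min(|y-x|,\rho(x))$, giving $L_\rho(\gamma)\geq \min(|y-x|,\rho(x))/(C\rho(x))$. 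Either way $d_\rho(x,y)>0$.

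For the two-sided ball comparison I would exploit the same dichotomy. To show $B(x,r\rho(x))\subseteq B_\rho(x,Cr)$, I take the straight segment from $x$ to $y$; since $r\leq 1$ it lies in $B(x,\rho(x))$, where $\rho\geq C^{-1}\rho(x)$, so $L_\rho\leq C|y-x|/\rho(x)\leq Cr$. For $B_\rho(x,C^{-1}r)\subseteq B(x,r\rho(x))$, I pick any admissible curve $\gamma$ with $L_\rho(\gamma)<C^{-1}r\leq C^{-1}$; such a $\gamma$ cannot exit $B(x,\rho(x))$, for otherwise its trace in that ball would contribute at least $\rho(x)/(C\rho(x))=C^{-1}$ to $L_\rho$. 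Staying inside yields $|y-x|\leq C\rho(x)L_\rho(\gamma)<r\rho(x)$.

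The Lipschitz statement and the gradient bound reduce to the local estimate
\begin{equation*}
d_\rho(y_1,y_2)\leq \frac{C|y_1-y_2|}{\rho(y_1)} \qquad \text{whenever } |y_1-y_2|\leq \rho(y_1),
\end{equation*}
obtained by plugging the straight segment inside $B(y_1,\rho(y_1))$ into the definition of $L_\rho$ and using $\rho\geq C^{-1}\rho(y_1)$ there. Combined with the triangle inequality $|d_\rho(x,y_1)-d_\rho(x,y_2)|\leq d_\rho(y_1,y_2)$ and the fact that $\rho$ is locally bounded away from $0$ (a consequence of \eqref{radius-ineq}), this establishes local Lipschitzness of $y\mapsto d_\rho(x,y)$. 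By Rademacher's theorem the function is then differentiable almost everywhere, and evaluating the Lipschitz bound along $y_2=y_1+th$ with $t\to 0$ produces \eqref{radius-nabla}. The one delicate point is the bookkeeping of constants in the ball comparison: one must ensure that the threshold $r\leq 1$ combines with the constant $C$ from \eqref{radius-ineq} in such a way that the curves whose $\rho$-length is bounded really are forced to stay inside $B(x,\rho(x))$, otherwise the dichotomy argument for the upper inclusion collapses. Beyond that, the proof is routine.
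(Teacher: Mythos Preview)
Your proposal is correct and follows essentially the same approach as the paper: the dichotomy ``stays in $B(x,\rho(x))$ vs.\ exits'' is exactly the paper's first-exit-time argument (the paper packages it as the single lower bound $d_\rho(x,y)\geq C^{-1}\min\{|x-y|/\rho(x),1\}$, which simultaneously gives non-degeneracy and the inclusion $B_\rho(x,C^{-1}r)\subseteq B(x,r\rho(x))$), and the straight-segment estimate plus Rademacher is identical to the paper's treatment of the second inclusion and the Lipschitz/gradient claims.
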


\begin{proof}
Let $x,y\in\R^d$ be such that $|x-y|=s\rho(x)$, for some $s>0$. Take any piecewise $C^1$ curve $\gamma:[0,T]\rightarrow\R^d$ connecting $x$ and $y$, and let $T_0$ be the minimum time such that $|x-\gamma(T_0)|=\min\{s,1\}\rho(x)$. By \eqref{radius-ineq}, $\rho(\gamma(t))\leq C\rho(x)$ for every $t\in[0,T_0)$. Hence, \bee
L_\rho(\gamma)&\geq& \int_0^{T_0}\frac{|\gamma'(t)|}{\rho(\gamma(t))}dt\geq \frac{C^{-1}}{\rho(x)}\int_0^{T_0}|\gamma'(t)|dt\\
&\geq& \frac{C^{-1}}{\rho(x)} \min\{s,1\}\rho(x)=C^{-1}\min\{s,1\}.
\eee By the arbitrariness of $\gamma$, we conclude that $d_\rho(x,y)\geq C^{-1}\min\left\{\frac{|x-y|}{\rho(x)},1\right\}$. This implies that $d_\rho$ is non-degenerate and hence a genuine distance, triangle inequality and symmetry being obvious. 
It also shows that if $y$ lies in $B_\rho(x,C^{-1}r)$ ($r\leq1$), then $r>\min\left\{\frac{|x-y|}{\rho(x)},1\right\}$, and then the minimum has to be equal to $\frac{|x-y|}{\rho(x)}$, proving the first inclusion of the statement.

To prove the second inclusion, we use the fact that $\rho(u)\geq C^{-1}\rho(x)$ for every $u\in B(x,r\rho(x))$, if $r\leq 1$. Given $y\in B(x,r\rho(x))$, define $\sigma(t)=x+t(y-x)$ and notice that \be
d_\rho(x,y)\leq \int_0^1\frac{|\sigma'(t)|}{\rho(\sigma(t))}dt\leq \frac{C}{\rho(x)}|x-y|< Cr,
\ee so that $B(x,r\rho(x))\subseteq B_\rho(x,Cr)$, that is the second inclusion to be proved.

Fix now $x,y\in \R^d$ and let $h\in\R^d$ be such that $|h|<\rho(y)$. As above, we have $d_\rho(y,y+h)\leq C\frac{|h|}{\rho(y)}$. The triangle inequality then yields\bel\label{radius-lip}
|d_\rho(x,y+h)-d_\rho(x,y)|\leq d_\rho(y,y+h)\leq C\frac{|h|}{\rho(y)}\qquad\forall h: |h|<\rho(y).
\eel By the local boundedness of $\rho^{-1}$, we conclude that $d_\rho(x,\cdot)$ is locally Lipschitz. Rademacher's theorem implies that $d_\rho$ is almost everywhere differentiable and \eqref{radius-lip} translates into \eqref{radius-nabla}.\end{proof}

We conclude this section with two elementary propositions. The second one is a very classical construction of a covering.

\begin{prop}\label{radius-max}
If $\rho_1$ and $\rho_2$ are two radius functions on $\R^d$, then $\rho_1\vee\rho_2:=\max\{\rho_1,\rho_2\}$ is a radius function.
\end{prop}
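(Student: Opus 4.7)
The plan is to verify that $\rho := \rho_1 \vee \rho_2$ satisfies the definition of a radius function with the constant $C := \max\{C_1, C_2\}$, where $C_1, C_2$ are the constants associated with $\rho_1, \rho_2$ via \eqref{radius-ineq}. Borel measurability is automatic from the max of two Borel functions, so only the two-sided estimate needs verification.

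Fix $x \in \mathbb{R}^d$ and $y \in B(x, \rho(x))$, and by symmetry assume $\rho(x) = \rho_1(x) \geq \rho_2(x)$. The lower bound is immediate: since $y \in B(x, \rho_1(x))$, the radius property for $\rho_1$ gives $\rho(y) \geq \rho_1(y) \geq C_1^{-1} \rho_1(x) = C_1^{-1} \rho(x)$. For the upper bound, the same inequality yields $\rho_1(y) \leq C_1 \rho(x)$, so it remains only to control $\rho_2(y)$.

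This last control is the one subtle point, since $y$ is only guaranteed to lie in the potentially much larger ball $B(x, \rho_1(x))$, not in $B(x, \rho_2(x))$, and one therefore cannot directly apply the radius property of $\rho_2$ at the base point $x$. My trick is to swap the roles of $x$ and $y$ via a case split on the size of $\rho_2(y)$: either $\rho_2(y) \leq |x - y|$, in which case $\rho_2(y) < \rho_1(x) = \rho(x)$ trivially, or $\rho_2(y) > |x - y|$, which means precisely that $x \in B(y, \rho_2(y))$ so that the radius property of $\rho_2$ applied at the base point $y$ gives $\rho_2(x) \geq C_2^{-1} \rho_2(y)$, whence $\rho_2(y) \leq C_2 \rho_2(x) \leq C_2 \rho(x)$. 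Combining the two cases yields $\rho_2(y) \leq C_2 \rho(x)$, and together with $\rho_1(y) \leq C_1 \rho(x)$ this completes the verification. The ``swap'' in the second case is the only non-routine ingredient; everything else is bookkeeping.
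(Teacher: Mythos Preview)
Your proof is correct and uses essentially the same swap idea as the paper's argument. The only organizational difference is that the paper works directly with $\rho=\rho_1\vee\rho_2$ (case-splitting on whether $\rho(x)\leq\rho(y)$ and swapping $x,y$ for the combined function), whereas you bound $\rho_1(y)$ and $\rho_2(y)$ separately and perform the swap for $\rho_2$ alone; both routes hinge on the same observation that the one-sided inequality plus a suitable containment lets you reverse the roles of $x$ and $y$.
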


\begin{proof}
Assume that $C<+\infty$ is a constant for which \eqref{radius-ineq} holds both for $\rho_1$ and $\rho_2$.

Fix $x\in\R^d$ and assume that $\rho_1\vee\rho_2(x)=\rho_1(x)$. If $y\in B(x,\rho_1\vee\rho_2(x))$, then the first inequality in \eqref{radius-ineq} for $\rho_1$ yields \be
\rho_1\vee\rho_2(x)=\rho_1(x)\leq C\rho_1(y)\leq C\rho_1\vee\rho_2(y).
\ee
If $\rho_1\vee\rho_2(x)=\rho_2(x)$ the conclusion would be the same (using \eqref{radius-ineq} for $\rho_2$).

Now there are two possibilities: either $\rho_1\vee\rho_2(x)\leq \rho_1\vee\rho_2(y)$, in which case the same argument with $x$ and $y$ swapped gives the bound $\rho_1\vee\rho_2(y)\leq C\rho_1\vee\rho_2(x)$, or the converse inequality $\rho_1\vee\rho_2(y)< \rho_1\vee\rho_2(x)$ holds. In both cases the proof is completed.
\end{proof}

\begin{prop}\label{radius-covering}
If $\rho$ is a radius function there is a countable set $\{x_k\}_{k\in\N}\subseteq \R^d$ such that: \begin{enumerate}
\item[\emph{(i)}] $\left\{B(x_k,\rho(x_k))\right\}_{k\in\N}$ is a covering of $\R^d$,
\item[\emph{(ii)}] any $x\in\R^d$ lies in at most $K$ of the balls of the covering, where $K$ depends only on $C$ and $d$.
\end{enumerate}
\end{prop}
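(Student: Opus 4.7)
The plan is to carry out a classical Vitali-type maximal disjoint-balls construction, tuned to the radius function $\rho$. Let $C \geq 1$ be the constant from \eqref{radius-ineq} (the bound $C \geq 1$ is forced by taking $y=x$ in that inequality). By Zorn's lemma, produce a maximal subset $S = \{x_k\} \subseteq \R^d$ with the property that the open balls $B(x_k, \rho(x_k)/(2C))$ are pairwise disjoint. Countability of $S$ is immediate: each such ball has strictly positive Lebesgue measure, and $\R^d$ is $\sigma$-finite with respect to Lebesgue measure, so only countably many pairwise disjoint sets of positive measure can fit.

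For part (i), I would argue by contradiction: suppose $y \in \R^d$ does not lie in any $B(x_k, \rho(x_k))$. Then in particular $y \neq x_k$ for all $k$, and by maximality the ball $B(y, \rho(y)/(2C))$ must meet some $B(x_k, \rho(x_k)/(2C))$, so
\[
|y - x_k| < \tfrac{1}{2C}\bigl(\rho(y) + \rho(x_k)\bigr).
\]
Split into two cases. If $\rho(y) \leq C\rho(x_k)$, then the displayed inequality gives $|y-x_k| < \tfrac{C+1}{2C}\rho(x_k) \leq \rho(x_k)$ (using $C \geq 1$), contradicting $y \notin B(x_k,\rho(x_k))$. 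If instead $\rho(y) > C\rho(x_k)$, then the same inequality yields $|y-x_k| < \rho(y)/(2C) + \rho(y)/(2C^2) \leq \rho(y)$, so $x_k \in B(y,\rho(y))$; then \eqref{radius-ineq} applied at $y$ forces $\rho(x_k) \geq C^{-1}\rho(y)$, contradicting the case hypothesis. Either way we reach a contradiction, proving the covering property.

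For part (ii), fix $y \in \R^d$ and let $J_y := \{k : y \in B(x_k,\rho(x_k))\}$. For each $k \in J_y$, inequality \eqref{radius-ineq} (applied at $x_k$, since $y \in B(x_k,\rho(x_k))$) gives $C^{-1}\rho(x_k) \leq \rho(y) \leq C\rho(x_k)$. Hence $x_k \in B(y, C\rho(y))$, and the disjoint ball $B(x_k,\rho(x_k)/(2C))$ has radius at least $\rho(y)/(2C^2)$ while being contained in $B(y, 2C\rho(y))$. Comparing Lebesgue measures — the disjoint union of the small balls sits inside $B(y,2C\rho(y))$ — yields
\[
|J_y| \cdot c_d \bigl(\rho(y)/(2C^2)\bigr)^d \leq c_d (2C\rho(y))^d,
\]
so $|J_y| \leq (4C^3)^d =: K$, a bound depending only on $C$ and $d$.

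The only delicate point is the choice of shrinkage factor $1/(2C)$ in the disjoint family: it has to be small enough for the two-case analysis in (i) to close up (the borderline arises when $\rho(y) \approx C\rho(x_k)$), but obviously any fixed choice of the same order will give a finite multiplicity in (ii). No step is truly an obstacle; the argument is a routine adaptation of the Vitali covering lemma once one notices that the radius function condition \eqref{radius-ineq} plays the role of ``slowly varying radii.''
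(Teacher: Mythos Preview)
Your proof is correct and follows essentially the same maximal-disjoint-balls strategy as the paper: both pick a maximal family of shrunk balls (you use factor $1/(2C)$, the paper uses $1/(1+C^2)$), establish (i) via maximality, and bound the overlap in (ii) by a volume comparison. The only cosmetic differences are your two-case split in (i) versus the paper's ``pick a point in the intersection and apply \eqref{radius-ineq} twice'' argument, and that in (ii) you fix an arbitrary point while the paper fixes a ball of the covering; neither difference is substantive.
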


\begin{proof}
Let $\{x_k\}_{k\in\N}$ be such that $\mathcal{B}=\left\{B(x_j,\frac{\rho(x_j))}{1+C^2}\right\}_{j\in\N}$ be any maximal disjoint subfamily of $\left\{B(x,\frac{\rho(x))}{1+C^2}\right\}_{x\in\R^d}$ (of course, any maximal subfamily is countable). If $x\neq x_j$ for every $j\in\N$, then by maximality there exists a $k$ such that $B\left(x,\frac{\rho(x)}{1+C^2}\right)$ intersects $B\left(x_k,\frac{\rho(x_k)}{1+C^2}\right)$. Picking a point in the intersection and using twice \eqref{radius-ineq}, we see that $\rho(x)\leq C^2\rho(x_k)$, and thus that $x\in B(x_k,\rho(x_k))$. This proves (i).

To see that (ii) holds, fix $k$ and consider the indices $j_1,\dots,j_N$ corresponding to balls of the covering intersecting $B(x_k,\rho(x_k))$. By the same argument as above, we see that $C^{-2}\rho(x_k)\leq \rho(x_{j_\ell})\leq C^2\rho(x_k)$. This means that $B(x_k,(1+2C^2)\rho(x_k))$ contains $B\left(x_{j_\ell}, \frac{\rho(x_{j_\ell})}{1+C^2}\right)$ for every $\ell$. These balls are disjoint by construction and their radius is $\geq\frac{\rho(x_k)}{C^2(1+C^2)} $, therefore $N$ has to be bounded by a constant which depends only on $C$ and the dimension $d$.
\end{proof}

\subsection{Radius functions associated to potentials}\label{radiustopot-sec}

Now consider a measurable function\be
V:\R^d\rightarrow [0,+\infty).
\ee 
We assume that $V$ is locally bounded, not almost everywhere zero, and satisfies the following \emph{$L^\infty$-doubling condition}:\bel\label{pot-doubling}
||V||_{L^\infty(B(x,2r))}\leq D ||V||_{L^\infty(B(x,r))}\qquad\forall x\in\R^d,\ r>0,
\eel
where $D<+\infty$ is a constant independent of $x$ and $r>0$.

We want to associate to every such $V$ a certain radius function. Before giving the detailed arguments, let us describe the heuristics behind it. \newline

\par If we have a free quantum particle moving in $\R^d$ and $B$ is a ball of radius $r$, the uncertainty principle asserts that in order to localize the particle on the ball $B$ one needs an energy of the order of $r^{-2}$. If the particle is not free, but it is subject to a potential $V$, this energy increases by the size of $V$ on $B$. This means in particular that if $B$ is such that $\max_B V\leq r^{-2}$, then the amount of energy required for the localization is comparable to the one in the free case: in this case \emph{one does not feel the potential on $B$}. The radius function $\rho_V$ we are going to describe gives at every point $x$ the largest radius $\rho_V(x)$ such that one cannot feel the potential $V$ on $B(x,\rho_V(x))$.\newline

To formalize the discussion above, we begin by defining the function \be
f(x,r):=r^2||V||_{L^\infty(B(x,r))}\qquad (x\in\R^d, r>0).\ee 
We highlight two properties of $f$:\begin{enumerate}
\item $f(x,r)$ is strictly monotone in $r$ for every fixed $x$. 
\item $\lim_{r\rightarrow0+}f(x,r)=0$ and $\lim_{r\rightarrow+\infty}f(x,r)=+\infty$ for every $x$.
\end{enumerate}
To verify them, it is useful to observe that since $V$ is not almost everywhere $0$, an iterated application of \eqref{pot-doubling} shows that $||V||_{L^\infty(B)}>0$ for every non empty ball $B$.

We define \be
\rho_V(x):=\sup\{r>0:\ f(x,r)\leq 1\}.
\ee
By properties (1) and (2) above the $\sup$ exists and it is positive and finite. 

\begin{prop}\label{pot-rsquared}
We have\be
\frac{\rho_V(x)^{-2}}{4D}\leq||V||_{L^\infty(B(x,\rho_V(x)))} \leq \rho_V(x)^{-2}\qquad\forall x\in\R^d.
\ee
\end{prop}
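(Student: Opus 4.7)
The plan is to read off both inequalities directly from the definition
\[
\rho_V(x)=\sup\{r>0:\ f(x,r)\leq1\},\qquad f(x,r)=r^2\|V\|_{L^\infty(B(x,r))},
\]
using the strict monotonicity of $f(x,\cdot)$ for one inequality, and the $L^\infty$-doubling condition \eqref{pot-doubling} for the other.

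For the upper bound $\|V\|_{L^\infty(B(x,\rho_V(x)))}\leq \rho_V(x)^{-2}$, I would first note that $r\mapsto\|V\|_{L^\infty(B(x,r))}$ is nondecreasing and left-continuous, since the open ball $B(x,r)$ is the increasing union $\bigcup_{s<r}B(x,s)$. Hence $f(x,\cdot)$ is left-continuous. By strict monotonicity of $f(x,\cdot)$ and the definition of $\rho_V(x)$ as a supremum, $f(x,r)\leq1$ for every $r<\rho_V(x)$. Letting $r\uparrow\rho_V(x)$ and using left-continuity yields $f(x,\rho_V(x))\leq1$, which is precisely the right-hand inequality.

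For the lower bound, I would test the definition at the radius $2\rho_V(x)>\rho_V(x)$. Since this radius lies strictly outside the set in the supremum, $f(x,2\rho_V(x))>1$, which reads
\[
4\rho_V(x)^2\,\|V\|_{L^\infty(B(x,2\rho_V(x)))}>1.
\]
Applying \eqref{pot-doubling} at $r=\rho_V(x)$ gives $\|V\|_{L^\infty(B(x,2\rho_V(x)))}\leq D\,\|V\|_{L^\infty(B(x,\rho_V(x)))}$, and substituting produces $4D\rho_V(x)^2\,\|V\|_{L^\infty(B(x,\rho_V(x)))}>1$, i.e., the desired lower bound.

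I do not expect a genuine obstacle here: the statement is essentially a calibration between the threshold $f(x,r)=1$ that defines $\rho_V$ and a single doubling step. The only subtle point is the passage to the limit $r\uparrow\rho_V(x)$ in the upper bound, and this is handled by the left-continuity remark (equivalently, by the observation that $B(x,\rho_V(x))=\bigcup_{s<\rho_V(x)}B(x,s)$, so taking essential suprema over this exhausting family preserves the bound).
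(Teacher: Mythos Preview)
Your proof is correct and follows essentially the same approach as the paper: the right inequality is read off from the definition of $\rho_V$, and the left one comes from testing at $r=2\rho_V(x)$ and applying the $L^\infty$-doubling condition once. Your added remark on left-continuity of $r\mapsto\|V\|_{L^\infty(B(x,r))}$ makes the passage $f(x,\rho_V(x))\leq1$ fully rigorous, whereas the paper simply says the right inequality ``follows immediately from the definition.''
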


\begin{proof}
The right inequality follows immediately from the definition of $\rho_V$. To prove the one on the left, observe that \eqref{pot-doubling} implies\bee
f(x, 2\rho_V(x))&=&4\rho_V(x)^2||V||_{L^\infty(B(x,2\rho_V(x)))}\\
&\leq& 4D\rho_V(x)^2||V||_{L^\infty(B(x,\rho_V(x)))}.
\eee The definition of $\rho_V(x)$ shows that the last term is smaller than $4D$, while the first one is larger than $1$. This finishes the proof.
\end{proof}

The next two results together prove that $\rho_V$ is a radius function.

\begin{prop} The function $\rho_V$ is Borel. 
\end{prop}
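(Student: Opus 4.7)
The plan is to reduce the Borel measurability of $\rho_V$ to that of the ``slice'' functions $x\mapsto f(x,r)$ for fixed $r$, by exploiting the strict monotonicity of $r\mapsto f(x,r)$ (property (1) noted just before the proposition) to replace the uncountable supremum in the definition of $\rho_V$ by a countable one over rational radii.

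First I would show that, for each fixed $r>0$, the function $g_r(x):=\|V\|_{L^\infty(B(x,r))}$ is lower semicontinuous on $\R^d$, and hence Borel. The argument is that if $x_n\to x$ and $\varepsilon\in(0,r)$, then for $n$ large enough the triangle inequality gives $B(x,r-\varepsilon)\subseteq B(x_n,r)$, so
\be
\liminf_{n\to\infty} g_r(x_n)\geq \|V\|_{L^\infty(B(x,r-\varepsilon))}.
\ee
Letting $\varepsilon\to 0^+$ and invoking monotone convergence of the essential supremum along the exhaustion of the open ball $B(x,r)$ by $B(x,r-\varepsilon)$ yields $\liminf_n g_r(x_n)\geq g_r(x)$. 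Consequently $f(\cdot,r)=r^2 g_r(\cdot)$ is Borel for every fixed $r>0$.

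Second, I would verify the level-set identity
\be
\{x\in\R^d:\ \rho_V(x)>t\} \ =\ \bigcup_{q\in\Q\cap(t,+\infty)}\{x\in\R^d:\ f(x,q)\leq 1\}\qquad (t>0).
\ee
The inclusion $\supseteq$ is immediate from the definition of $\rho_V$ as a supremum. For $\subseteq$, if $\rho_V(x)>t$ then by definition of the supremum there exists $r>t$ with $f(x,r)\leq 1$; any rational $q\in(t,r)$ then satisfies $f(x,q)<f(x,r)\leq 1$ by the strict monotonicity of $f(x,\cdot)$. Since the right-hand side is a countable union of Borel sets by the first step, $\{\rho_V>t\}$ is Borel for every $t>0$, which proves the proposition.

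The only (mild) technical obstacle is handling the essential supremum carefully and working with open rather than closed balls in the lower semicontinuity step; the substantive content is the strict monotonicity of $f$ in $r$, which is precisely what makes the passage from an uncountable to a countable supremum legitimate.
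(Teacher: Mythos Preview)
Your proof is correct and follows essentially the same route as the paper's: reduce to level sets, pass to a countable union over rational radii via the monotonicity of $f(x,\cdot)$, and check that $x\mapsto\|V\|_{L^\infty(B(x,r))}$ is lower semicontinuous (hence Borel) for each fixed $r$. The only differences are cosmetic---you present the lower semicontinuity step first and justify the passage to rationals more explicitly than the paper does.
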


\begin{proof}
We have to see that $\{x:\rho_V(x)>t\}$ is a Borel set for every $t>0$, but \bee
\{x:\rho_V(x)>t\}&=&\{x:\ \exists r>t \text{ s.t. } f(x,r)\leq 1\}\\
&=&\{x:\ \exists r\in\Q\cap(t,+\infty) \text{ s.t. } f(x,r)\leq 1\}\\
&=&\cup_{r\in\Q\cap(t,+\infty)}\{x:\ ||V||_{L^\infty(B(x,r))}\leq r^{-2}\}.
\eee
It then suffices to verify that $||V||_{L^\infty(B(x,r))}$ is Borel in $x$ for every fixed $r>0$. In fact, $||V||_{L^\infty(B(\cdot,r))}$ is lower semi-continuous: $||V||_{L^\infty(B(x,r))}>u$ if and only if $V>u$ on a subset of positive measure of $B(x,r)$, and this property is clearly preserved by small perturbations of the center $x$.
\end{proof}

\begin{prop}\label{pot-to-rad} The function $\rho_V$ satisfies the following inequalities for every $x,y\in\R^d$: \be
 C^{-1}  \max\left\{\frac{|x-y|}{\rho_V(x)},1\right\}^{-M_1}\rho_V(x)\leq \rho_V(y)\leq C  \max\left\{\frac{|x-y|}{\rho_V(x)},1\right\}^{M_2}\rho_V(x),
\ee where $C, M_1, M_2$ depends only on the $L^\infty$-doubling constant $D$ appearing in \eqref{pot-doubling}. 

In particular $\rho_V$ is a radius function.
\end{prop}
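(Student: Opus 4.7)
\emph{Strategy.} Set $s := |x-y|/\rho_V(x)$ and $s^\ast := \max\{s, 1\}$. The radius-function property \eqref{radius-ineq} for $\rho_V$ is just the specialization of the asserted two-sided polynomial estimate to $s \leq 1$ (equivalently $s^\ast = 1$), so it suffices to prove the polynomial bound. Both halves of that bound rely on the same mechanism: combine the sharp two-sided control of $||V||_{L^\infty(B(x,\rho_V(x)))}$ supplied by Proposition \ref{pot-rsquared} with repeated applications of the $L^\infty$-doubling hypothesis \eqref{pot-doubling}, and use the trivial inclusion $B(z_1, R) \subseteq B(z_2, R + |z_1 - z_2|)$ to pass between balls centred at $x$ and at $y$.

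\emph{Upper bound on $\rho_V(y)$.} Write $\rho_V(y) = M\rho_V(x)$. From $B(x, \rho_V(x)) \subseteq B(y, (1+s)\rho_V(x))$, and iterating \eqref{pot-doubling} the smallest number $j$ of times such that $2^j \rho_V(y) \geq (1+s)\rho_V(x)$, one obtains
\be
||V||_{L^\infty(B(x, \rho_V(x)))} \leq ||V||_{L^\infty(B(y, (1+s)\rho_V(x)))} \leq D^j ||V||_{L^\infty(B(y, \rho_V(y)))} \leq D^j \rho_V(y)^{-2}.
\ee
Combining with $||V||_{L^\infty(B(x,\rho_V(x)))} \geq (4D)^{-1}\rho_V(x)^{-2}$ from Proposition \ref{pot-rsquared} gives $M^2 \leq 4D^{j+1}$. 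Since $j$ may be taken no larger than $1+\max\{0,\log_2((1+s)/M)\}$, substituting this estimate and solving for $M$ yields $M \leq C(s^\ast)^{M_2}$ with $C$ and $M_2$ depending only on $D$.

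\emph{Lower bound on $\rho_V(y)$.} The idea is to verify directly that $r := c(s^\ast)^{-M_1}\rho_V(x)$ satisfies $||V||_{L^\infty(B(y, r))} \leq r^{-2}$, so that $\rho_V(y) \geq r$ by definition of $\rho_V$. For $M_1 \geq 0$ and $c \leq 1$ we have $B(y, r) \subseteq B(x, 2s^\ast\rho_V(x))$, and iterating \eqref{pot-doubling} about $\log_2 s^\ast$ times starting from $B(x,\rho_V(x))$ gives $||V||_{L^\infty(B(y, r))} \leq 2D\,(s^\ast)^{\log_2 D}\rho_V(x)^{-2}$. Choosing $M_1$ large and $c$ small, both depending only on $D$, makes the right-hand side at most $r^{-2}$.

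\emph{Main obstacle.} There is no deep obstacle: the entire argument is mechanical bookkeeping of doubling constants and logarithms. The only delicate point is the passage between the regimes $s \leq 1$ and $s > 1$, which is cleanly absorbed into the cutoff $s^\ast$; specializing finally to $s^\ast = 1$ recovers \eqref{radius-ineq} and hence the claim that $\rho_V$ is a radius function.
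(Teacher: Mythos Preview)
Your argument is correct. The lower-bound half is essentially identical to the paper's: both pick a small radius $r$ (the paper writes it as $2^{-k}D^{-(k+1)/2}s$, you as $c(s^\ast)^{-M_1}\rho_V(x)$), enlarge $B(y,r)$ to a ball centred at $x$, and iterate doubling down to $B(x,\rho_V(x))$ to force $f(y,r)\le 1$.

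The genuine difference is in the upper bound. The paper does not argue directly: having shown $\rho_V(x)\le 2^{Mk}\rho_V(y)$, it notes that this gives $|x-y|<2^{(M+1)k}\rho_V(y)$ and then simply swaps the roles of $x$ and $y$ to obtain $\rho_V(y)\le 2^{M(M+1)k}\rho_V(x)$. You instead run an independent computation, invoking the lower bound in Proposition~\ref{pot-rsquared} at $x$ and the upper bound at $y$, and then solving the implicit inequality $M^2\le 4D^{j+1}$ with $j\lesssim\log_2((1+s)/M)$ for $M$. Your route needs Proposition~\ref{pot-rsquared} (which the paper's proof does not use at all) and a short bootstrap, but it has the advantage of giving an explicit exponent $M_2=\log_2 D/(2+\log_2 D)<1$, whereas the symmetry trick produces the larger exponent $M(M+1)$. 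One minor slip: in the lower-bound step the constant should be $D^2$ rather than $2D$ after $\lceil 1+\log_2 s^\ast\rceil$ iterations, but this does not affect the argument.
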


\begin{proof} We have already seen that $\rho_V:\R^d\rightarrow(0,+\infty)$ is well-defined and Borel. Assume that $|x-y|<2^k \rho_V(x)$, for some integer $k\geq 1$. If $|x-y|<s<2^k \rho_V(x)$ we have\bee
s^2||V||_{L^\infty(B(y,s))}&\leq& s^2||V||_{L^\infty(B(x,2s))}\\
&\leq&D^{k+1}s^2||V||_{L^\infty(B(x,2^{-k}s))}\\
&=& 2^{2k}D^{k+1}(2^{-k}s)^2||V||_{L^\infty(B(x,2^{-k}s))}\\
&\leq&2^{2k}D^{k+1},
\eee
where in the third line we used $k+1$ times \eqref{pot-doubling} and in the last one we used the fact that $2^{-k}s<\rho_V(x)$.

 In particular $f(y,2^{-k}D^{-\frac{k+1}{2}}s)\leq 1$ and hence $2^{-k}D^{-\frac{k+1}{2}}s\leq \rho_V(y)$. By the arbitrariness of $s<2^k \rho_V(x)$, we conclude that \bel\label{pot-half-bound}
 \rho_V(x)\leq D^{\frac{k+1}{2}}\rho_V(y)\leq 2^{Mk}\rho_V(y),
 \eel for an integer $M$ depending only on $D$. 
 
 Inequality \eqref{pot-half-bound} gives $|x-y|<2^{(M+1)k}\rho_V(y)$, so that we can apply the above argument with $x$ and $y$ inverted, we conclude that $\rho_V(y)\leq 2^{M(M+1)k}\rho_V(x)$. Now the thesis follows choosing $k$ such that $2^k$ is comparable to $\max\left\{\frac{|x-y|}{\rho_V(x)},1\right\}$.\end{proof}

\section{Admissible weights}\label{adm-sec}

It is now time to introduce the class of weights to which our main results apply.

\begin{dfn}\label{adm-dfn} A $C^2$ plurisubharmonic weight $\varphi:\C^n\rightarrow\R$ is said to be admissible if:\begin{enumerate}
\item[\emph{(1)}] the following $L^\infty$ doubling condition holds:\be
\sup_{B(z,2r)}\Delta\varphi\leq D\sup_{B(z,r)}\Delta\varphi \quad\forall z\in \C^n,\ r>0,
\ee for some finite constant $D$ which is independent of $z$ and $r$,
\item[\emph{(2)}] there exists $c>0$ such that 
\bel\label{adm-lower}
\inf_{z\in\C^n}\sup_{w\in B(z,c)}\Delta\varphi(w)>0.
\eel
\end{enumerate}
\end{dfn}

If $\varphi$ is an admissible weight, then \be V\equiv\Delta\varphi:\C^n\rightarrow[0,+\infty)\ee satisfies condition \eqref{pot-doubling} of Section \ref{radius-sec} (we are identifying $\C^n$ and $\R^{2n}$), and is continuous and not everywhere zero, because of \eqref{adm-lower}. Thus we have the associated radius function $\rho_{\Delta\varphi}$ and distance function $d_{\Delta\varphi}$. Since here we are dealing only with one fixed weight $\varphi$, we can drop the subscript and denote them just by $\rho$ and $d$. We call $\rho$ the \emph{maximal eigenvalue radius function} and $d$ the \emph{maximal eigenvalue distance} corresponding to the weight $\varphi$. The reason for this name is simple: as we have already remarked, $\Delta\varphi$ is four times the trace of the complex Hessian of  $\varphi$ and hence it is comparable to its maximal eigenvalue.

\begin{prop}\label{adm-bounded}
The maximal eigenvalue radius function associated to an admissible weight is bounded.
\end{prop}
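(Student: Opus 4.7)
The plan is to combine condition (2) in the definition of admissibility with the upper bound on $\|V\|_{L^\infty(B(x,\rho_V(x)))}$ established in Proposition \ref{pot-rsquared}, applied to $V=\Delta\varphi$. Condition (2) supplies constants $c,m>0$ such that for every $z\in\C^n$ one has $\sup_{B(z,c)}\Delta\varphi\geq m$, while Proposition \ref{pot-rsquared} gives $\sup_{B(z,\rho(z))}\Delta\varphi\leq \rho(z)^{-2}$ for every $z$.

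The argument then splits into two trivial cases. If $\rho(z)\leq c$, the bound is immediate. Otherwise $\rho(z)>c$, so that $B(z,c)\subseteq B(z,\rho(z))$, and monotonicity of the $L^\infty$-norm over balls yields
\be
m\leq \sup_{B(z,c)}\Delta\varphi \leq \sup_{B(z,\rho(z))}\Delta\varphi \leq \rho(z)^{-2},
\ee
hence $\rho(z)\leq m^{-1/2}$. Combining, $\rho(z)\leq \max\{c,m^{-1/2}\}$ uniformly in $z$.

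I do not anticipate any obstacle: the proof is a two-line consequence of Proposition \ref{pot-rsquared} and the quantitative form of the nondegeneracy hypothesis \eqref{adm-lower}. The only minor point to notice is that \eqref{adm-lower} does implicitly produce a uniform $m>0$ (being the $\inf$ over $z$ of the $\sup$ of $\Delta\varphi$ on $B(z,c)$), so no further reduction is needed.
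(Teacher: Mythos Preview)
Your argument is correct and is essentially the same as the paper's: both combine the definition of $\rho$ (equivalently, the upper bound in Proposition~\ref{pot-rsquared}) with the uniform lower bound \eqref{adm-lower} to conclude $\rho(z)\leq\max\{c,m^{-1/2}\}$. The only difference is that you spell out the case split explicitly, whereas the paper leaves this as an immediate consequence.
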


\begin{proof}
By the definition of the radius function associated to a potential (see Section \ref{radius-sec}), \be
\rho(z):=\sup\{r>0: \sup_{w\in B(z,r)}\Delta\varphi(w)\leq r^{-2}\},
\ee and the statement follows immediately from \eqref{adm-lower}.
\end{proof}

The next lemma will play a key role in later sections.

\begin{lem}\label{adm-hol} Let $\varphi$ and $\rho$ be as above. There exists a constant $C$ depending only on $\varphi$ such that if $h:B(z,r)\rightarrow\C$ is holomorphic and $r\leq \rho(z)$, then \be
|h(z)|^2e^{-2\varphi(z)}\leq \frac{C}{|B(z,r)|}\int_{B(z,r)}|h|^2e^{-2\varphi}.
\ee 
\end{lem}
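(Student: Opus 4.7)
I would reduce the estimate to a sub-mean value inequality for a nonnegative subsolution of a Schr\"odinger-type equation on $B(z,r)$, and then apply a standard elliptic regularity result. The first step is to set $u:=|h|^2e^{-2\varphi}$ and to derive, by a direct computation using $\Delta=4\sum_j\partial_j\dbar_j$ and the holomorphicity of $h$, the identity
\be
\Delta u = 4e^{-2\varphi}\sum_{j=1}^n|(\partial_j-2\partial_j\varphi)h|^2 - 2(\Delta\varphi)\,u,
\ee
whence $\Delta u \geq -2(\Delta\varphi)\,u$ on $B(z,r)$. (An equivalent derivation passes through $\Delta\log u=-2\Delta\varphi$, valid away from the zero set of $h$ since $\log|h|$ is then pluriharmonic, combined with $\Delta u=u(\Delta\log u+|\nabla\log u|^2)$; zeros of $h$ are handled by working with $u+\eps$ and letting $\eps\to 0^+$.) Since $r\leq\rho(z)$, the definition of $\rho$ in Section \ref{radiustopot-sec} gives $\|\Delta\varphi\|_{L^\infty(B(z,r))}\leq r^{-2}$, so the potential $V:=2\Delta\varphi$ satisfies $\|V\|_{L^\infty(B(z,r))}\leq 2r^{-2}$.

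The second step is to exhibit an explicit positive supersolution of the same equation and use it to upgrade the subsolution property to a genuine sub-mean value inequality. I would take $\phi(w):=2-|w-z|^2/r^2$, so that $1\leq\phi\leq 2$ on $B(z,r)$, $\phi(z)=2$, and $\Delta\phi+V\phi\leq -4nr^{-2}+4r^{-2}\leq 0$ for $n\geq 1$. A short algebraic manipulation then gives
\be
\mathrm{div}(\phi^2\nabla(u/\phi)) = \phi\,\Delta u - u\,\Delta\phi \geq -\phi V u + uV\phi = 0,
\ee
so $v:=u/\phi$ is a nonnegative subsolution of a uniformly elliptic divergence-form equation on $B(z,r)$ whose coefficients $\phi^2$ lie in $[1,4]$.

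To conclude, I would invoke the Moser mean-value inequality for nonnegative subsolutions of such equations (see, e.g., Theorem 8.17 of Gilbarg--Trudinger) to obtain $v(z)\leq C_n|B(z,r)|^{-1}\int_{B(z,r)}v$ with $C_n$ a dimensional constant; multiplying by $\phi(z)=2$ and using $\phi\geq 1$ in the integrand recovers the desired bound for $u=|h|^2e^{-2\varphi}$. The main technical obstacle I anticipate lies in the first step: because $u$ need not be $C^2$ at the zeros of $h$, the subsolution property must be interpreted distributionally, or obtained by the $\eps$-regularization mentioned above, before the elliptic regularity theorem can be applied as a black box.
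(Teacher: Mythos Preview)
Your argument is correct and takes a genuinely different route from the paper. The paper proves the lemma via a holomorphic gauge change: by Proposition~\ref{adm-gauge} (a local $\partial\dbar$-lemma quoted from Delin), there is a real $\psi$ on $B(z,\rho(z))$ with $H_\psi=H_\varphi$ and $\|\psi\|_\infty\leq C_n$; since $\psi-\varphi$ is pluriharmonic one writes $\psi-\varphi=\Re H$ with $H$ holomorphic, and then applies the ordinary mean-value inequality to the holomorphic function $he^H$, the bound on $\psi$ absorbing the weight. Your approach bypasses Proposition~\ref{adm-gauge} entirely and stays on the real-variable side: you show that $u=|h|^2e^{-2\varphi}$ satisfies $\Delta u+2(\Delta\varphi)u\geq 0$, use $r\leq\rho(z)$ to make the zeroth-order coefficient scale-invariant, build an explicit barrier $\phi$ so that $u/\phi$ is a subsolution of a pure divergence-form equation with ellipticity in $[1,4]$, and finish with Moser's local maximum principle. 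The paper's method is shorter once the $\partial\dbar$-lemma is available, relying only on the classical mean-value property; yours replaces that complex-analytic input by standard elliptic theory and would transfer to weighted settings where no holomorphic correction is at hand.

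One small remark: the obstacle you anticipate is not actually present. Since $|h|^2=h\bar h$ is real-analytic and $\varphi$ is $C^2$, the function $u$ is $C^2$ on all of $B(z,r)$, zeros of $h$ included, and your identity
\be
\Delta u = 4e^{-2\varphi}\sum_{j=1}^n|(\partial_j-2\partial_j\varphi)h|^2 - 2(\Delta\varphi)\,u
\ee
holds pointwise without any $\eps$-regularization. The difficulty you describe would only arise along the alternative $\log u$ route.
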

Notice that the above estimate holds for every ball if $\varphi=0$ (it follows immediately from the mean-value property for $h$). One can think of $\rho(z)$ as the maximal scale at which \emph{one does not feel the weight}. This should be compared with the heuristic discussion in Section \ref{radiustopot-sec}. 

The proof of Lemma \ref{adm-hol} is based on the following proposition.

\begin{prop}\label{adm-gauge} Let $\varphi$ and $\rho$ be as above. For every $z\in\C^n$ there exists a $C^2$ function $\psi:B(z,\rho(z))\rightarrow\R$ such that $H_\varphi=H_\psi$, i.e., $\frac{\partial^2\varphi}{\partial z_j\partial \overline{z}_k}=\frac{\partial^2\psi}{\partial z_j\partial \overline{z}_k}$ $\forall j,k$, and such that \be
\sup_{w\in B(z,\rho(z))}|\psi(w)|\leq C_n,
\ee where $C_n$ is a constant which depends only on the dimension $n$.
\end{prop}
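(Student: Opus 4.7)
\emph{Plan.} Since $\partial\dbar$ annihilates pluriharmonic functions, and every pluriharmonic function on the simply connected ball $B(z,\rho(z))$ is the real part of a holomorphic function, producing $\psi$ as in the statement is equivalent to finding a real $\psi\in C^2(B(z,\rho(z)))$ with $\partial\dbar\psi=\partial\dbar\varphi$ and $\|\psi\|_\infty$ bounded by a dimensional constant; the representation $\psi=\varphi-\Re h$ for some holomorphic $h$ then comes for free.

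\emph{Reduction to a $\partial\dbar$-Poincar\'e lemma on the unit ball.} Via the affine change of variable $w\mapsto z+\rho(z)w$ I reduce to $z=0$ and $\rho(0)=1$. Proposition \ref{pot-rsquared} then gives $\Delta\varphi\le \tilde C$ on $B(0,1)$ with a dimensional $\tilde C$. Plurisubharmonicity makes $H_\varphi$ positive semi-definite, so each entry $(H_\varphi)_{j\bar k}$ is bounded in modulus by the trace $\tfrac14\Delta\varphi$; hence all coefficients of the real closed $(1,1)$-form $\omega:=i\partial\dbar\varphi$ are uniformly bounded on $B(0,1)$ by a dimensional constant. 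What remains is a quantitative local $\partial\dbar$-Poincar\'e lemma: produce a real $\psi\in C^2(B(0,1))$ with $i\partial\dbar\psi=\omega$ and $\|\psi\|_{L^\infty(B(0,1))}\le C_n$.

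\emph{Construction of $\psi$, conclusion, and main obstacle.} A natural way to carry out this step: for each $j=1,\ldots,n$ solve $\dbar u_j=\sum_k(H_\varphi)_{j\bar k}\,d\bar z_k$ on $B(0,1)$ using a bounded right inverse of $\dbar$ on the ball (e.g.\ a Henkin- or Koppelman-type solution operator, which maps $L^\infty$ data to $L^\infty$ solutions with a dimensional constant); the $(1,0)$-form $\alpha=\sum_j u_j\,dz_j$ then satisfies $\dbar\alpha=-i\omega$. One may further correct $\alpha$ by a holomorphic $(1,0)$-form (whose existence and dimensional size follow from a quantitative holomorphic Poincar\'e lemma applied to the holomorphic $(2,0)$-form $\partial\alpha$, using interior elliptic estimates on a slightly smaller ball to bound the coefficients of $\partial\alpha$) so as to arrange $\partial\alpha=0$ without losing the $L^\infty$ bound. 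Then $\alpha+\bar\alpha$ is a $d$-closed real $1$-form, and integrating it along radial segments from the origin yields a real $\psi$ with $d\psi=\alpha+\bar\alpha$, hence $i\partial\dbar\psi=\omega$, and with $|\psi|\le\|\alpha+\bar\alpha\|_\infty\le C_n$. Since $\partial\dbar(\varphi-\psi)=0$ on the simply connected $B(0,1)$, the difference is pluriharmonic and equals $\Re h$ for some $h\in\mathcal{O}(B(0,1))$; reversing the normalization finishes the proof. The technical heart is the quantitative step: mere existence of $\psi$ is the classical local $\partial\dbar$-Poincar\'e lemma, while the dimensional $L^\infty$ bound relies on the $L^\infty\to L^\infty$ boundedness of a solution operator for $\dbar$ on the ball and on the bookkeeping needed to preserve this bound when correcting $\alpha$ to be $\partial$-closed.
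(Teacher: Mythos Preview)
Your reduction to a quantitative $\partial\dbar$-Poincar\'e lemma on the ball is exactly the paper's strategy. The paper, however, does not prove that lemma: it quotes it as Lemma~4 of \cite{delin}, which asserts that for any real $d$-closed continuous bounded $(1,1)$-form $\omega$ on $B(z,r)$ there is a real $\psi$ with $i\partial\dbar\psi=\omega$ and $\|\psi\|_\infty\le C_n r^2\|\omega\|_\infty$. Since $\|i\partial\dbar\varphi\|_{L^\infty(B(z,\rho(z)))}\le\tfrac14\rho(z)^{-2}$ by the very definition of $\rho$ (entries of a nonnegative Hermitian matrix are bounded by the trace), the bound $\|\psi\|_\infty\le C_n$ follows with $C_n$ depending \emph{only} on $n$. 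So where the paper cites, you sketch a proof.

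Your sketch is sound in outline, but the ``correction step'' creates a small gap relative to the statement as written. After solving $\dbar u_j=g_j$ via a Henkin-type operator (this part is fine: such operators do map $L^\infty$ $\dbar$-closed $(0,1)$-forms to bounded functions on the ball), you need to bound the holomorphic $(2,0)$-form $\partial\alpha$, and you do this by interior estimates on a strictly smaller ball. Consequently the holomorphic correction $\beta$, and hence $\psi$, lives only on $B(0,c)$ for some $c<1$; after undoing the normalization you obtain $\psi$ on $B(z,c\rho(z))$ rather than $B(z,\rho(z))$. One can recover the full ball by first running the argument on $B(z,2\rho(z))$ and invoking the $L^\infty$-doubling hypothesis to keep $\|i\partial\dbar\varphi\|_\infty\lesssim\rho(z)^{-2}$ there---but then the resulting constant depends on the doubling constant $D$, not only on $n$. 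This is harmless for every application in the paper (all later constants are allowed to depend on $\varphi$), but it does not quite deliver the proposition as stated. The paper avoids the issue entirely because Delin's lemma works on the full ball with a purely dimensional constant.
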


\begin{proof} We recall the following fact: if $\omega$ is a continuous and bounded $(1,1)$-form defined on $B(z,r)\subseteq\C^n$ such that:\begin{enumerate}
\item $\overline{\omega}=\omega$,   
\item it is $d$-closed in the sense of distributions,
\end{enumerate}
then there exists a real-valued, bounded and continuous function $\psi$ on $B(z,r)$ such that $i\partial\dbar\psi=\omega$ and $||\psi||_\infty\leq C_n r^2||\omega||_\infty$. The latter $L^\infty$ norm is the maximum of the $L^\infty$ norms of the coefficients of $\omega$. This is Lemma 4 of \cite{delin}, where a proof can be found. 

To deduce our proposition notice that $i\partial\dbar\varphi$, restricted to $B(z,\rho(z))$, satisfies conditions (1) and (2) above (recall that $\partial$ and $\dbar$ anti-commute).  The $L^\infty$ norm of $i\partial\dbar\varphi$ on $B(z,\rho(z))$ is bounded by a constant times $\rho(z)^{-2}$ by the definition of $\rho$ and the elementary observation that the coefficients of a non-negative matrix are bounded by its trace. Therefore there is a real-valued function $\psi$ on $B(z,\rho(z))$ such that $\partial\dbar\psi=\partial\dbar\varphi$ and $||\psi||_\infty\leq C_n$, as we wanted. Notice that $\psi-\varphi$ is harmonic, and hence smooth, so that $\psi$ has the same regularity as $\varphi$.
\end{proof}

\begin{proof}[Proof of Lemma \ref{adm-hol}]
By $A\lesssim B$ we mean $A\leq CB$, where $C$ is a constant depending only on $\varphi$. Fix $z$ and $r$ and let $\psi$ be the function given by Proposition \ref{adm-gauge}. Since $\psi-\varphi$ is pluriharmonic, there exists a holomorphic function $H$ on $B(z,r)$ such that $\Re(H)=\psi-\varphi$. If $h$ is as in the statement, using the $L^\infty$ bound on $\psi$, we can estimate
\be 
|h(z)|^2e^{-2\varphi(z)}\lesssim|h(z)|^2e^{2\psi(z)-2\varphi(z)}= |h(z)e^{H(z)}|^2.
\ee Applying the mean-value property and the Cauchy-Schwarz inequality to the holomorphic function $he^H$, we find
\bee
|h(z)e^{H(z)}|^2&\leq& \frac{1}{|B(z,r)|}\int_{B(z,r)}|he^H|^2\\
&=& \frac{1}{|B(z,r)|}\int_{B(z,r)}|h|^2 e^{\widetilde{\varphi}-\varphi}\\
&\lesssim& \frac{1}{|B(z,r)|}\int_{B(z,r)}|h|^2 e^{-\varphi}.
\eee This concludes the proof. \end{proof}

\section{Exponential decay of canonical solutions}\label{exp-sec}

Now that all the ingredients are in place, in this section we prove that if $\varphi$ is an admissible weight such that $\Box_\varphi$ is $\mu$-coercive and $\mu$ satisfies certain assumptions, then the canonical solutions of the weighted $\dbar$-problem exhibit a fast decay outside the support of the datum, in a way which is described in terms of $\mu$.

In the statement of the result we use the following terminology: a constant $C$ is \emph{allowable} if it depends only on $\varphi$, $\mu$ and the dimension $n$, and $A\lesssim B$ stands for the inequality $A\leq CB$, where the implicit constant $C$ is allowable. 

\begin{thm}\label{exp-thm} Let $\varphi$ be an admissible weight and assume that there exists \be\kappa:\C^n\rightarrow(0,+\infty)\ee such that:\begin{enumerate}
\item[\emph{(1)}] $\kappa$ is a bounded radius function, 
\item[\emph{(2)}] $\kappa(z)\geq\rho(z)$ for every $z\in\C^n$,
\item[\emph{(3)}] $\Box_\varphi$ is $\kappa^{-1}$-coercive.
\end{enumerate}
Recall that $\rho$ is the maximal eigenvalue function introduced in Section \ref{adm-sec}.

Then there are allowable constants $\eps, r_0,R_0>0$ such that the following holds true. Let $z\in\C^n$ and let $u\in L^2_{(0,1)}(\C^n,\varphi)$ be $\dbar$-closed and identically zero outside $B_\kappa(z,r_0)$. If $f$ is the canonical solution of \be
\dbar f=u,
\ee which exists by part \emph{(ii)} of Proposition \ref{coerc-prop}, then the pointwise bound\be
|f(w)|\lesssim e^{\varphi(w)}\kappa(z)e^{-\eps d_\kappa(z,w)} \rho(w)^{-n}||u||_\varphi
\ee
 holds for every $w$ such that $d_\kappa(z,w)\geq R_0$.
\end{thm}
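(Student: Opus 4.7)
The plan is to prove Theorem \ref{exp-thm} in two stages: a weighted $L^2$ Agmon-type exponential decay estimate for $v := N_\varphi u$ and for $f = \dbar^*_\varphi v$, followed by a passage to a pointwise bound via the sub-mean-value inequality of Lemma \ref{adm-hol}, exploiting the fact that $f$ is holomorphic off the support of $u$.

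For the Agmon stage, I set $\eta_M := \min(e^{\eps d_\kappa(z,\cdot)}, M)$ for a parameter $M>0$, so that $\eta_M$ is bounded and Lipschitz. By Proposition \ref{radius-prop} we have $|\nabla d_\kappa(z,\cdot)|\leq C/\kappa$ a.e., hence $|\nabla \eta_M|\leq \eps C \kappa^{-1}\eta_M$ a.e. Applying Proposition \ref{MKH-computation} to $\eta_M v$ and combining with $\kappa^{-1}$-coercivity,
\[
\|\kappa^{-1}\eta_M v\|_\varphi^2 \leq \mathcal{E}_\varphi(\eta_M v) = \tfrac14\int|\nabla\eta_M|^2|v|^2 e^{-2\varphi} + \Re(\eta_M u,\eta_M v)_\varphi,
\]
since $\Box_\varphi v=u$. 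The first term on the right is bounded by $\tfrac{\eps^2C^2}{4}\|\kappa^{-1}\eta_M v\|_\varphi^2$, which for $\eps$ small enough can be absorbed into the left-hand side. On $\mathrm{supp}(u)\subset B_\kappa(z,r_0)$ we have $\eta_M\leq e^{\eps r_0}$, and if $r_0$ is allowable then $\kappa$ is comparable to $\kappa(z)$ there (by the radius-function property). Applying Cauchy--Schwarz to $(\eta_M u,\eta_M v)_\varphi$ and dividing, we obtain
\[
\|\kappa^{-1}\eta_M v\|_\varphi \lesssim \kappa(z)\,\|u\|_\varphi.
\]
Letting $M\to\infty$ by monotone convergence yields the Agmon bound for $v$. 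To transfer it to $f$, I use $\dbar^*_\varphi(\eta_M v)=\eta_M f - \sum_j(\partial_j\eta_M) v_j$ from \eqref{dbarstar}; taking norms and estimating the extra commutator term by $\eps\|\kappa^{-1}\eta_M v\|_\varphi$ gives
\[
\int_{\C^n} e^{2\eps d_\kappa(z,\cdot)}|f|^2 e^{-2\varphi} \lesssim \kappa(z)^2\|u\|_\varphi^2.
\]

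For the pointwise stage, fix $w$ with $d_\kappa(z,w)\geq R_0$, where $R_0$ is chosen allowable and large enough that $B(w,\rho(w))\subset B_\kappa(w,C)$ is disjoint from $B_\kappa(z,r_0)$; this uses $\rho\leq\kappa$ and the ball comparison in Proposition \ref{radius-prop}. On $B(w,\rho(w))$ the form $f$ is holomorphic because $\dbar f = u = 0$ there, so by Lemma \ref{adm-hol} applied componentwise,
\[
|f(w)|^2 e^{-2\varphi(w)} \lesssim \rho(w)^{-2n} \int_{B(w,\rho(w))} |f|^2 e^{-2\varphi}.
\]
On $B(w,\rho(w))$, by the radius-function property of $\kappa$ and the containment in $B_\kappa(w,C)$, $d_\kappa(z,\cdot)\geq d_\kappa(z,w)-C$. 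Hence the last integral is at most $e^{-2\eps d_\kappa(z,w)+O(\eps)}$ times the Agmon integral above, giving exactly $|f(w)|\lesssim e^{\varphi(w)}\kappa(z) e^{-\eps d_\kappa(z,w)}\rho(w)^{-n}\|u\|_\varphi$.

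The main technical subtlety I anticipate is the Agmon truncation step: one must verify that $\eta_M$ is admissible in Proposition \ref{MKH-computation} with the correct gradient bound a.e., and that the absorption and limiting arguments are justified (including the initial finiteness of $\|\kappa^{-1}\eta_M v\|_\varphi$, which follows from boundedness of $\eta_M$ together with \eqref{coerc-neumann-bound}). Once these are in place, the rest is a careful orchestration of the radius-function estimates and Lemma \ref{adm-hol}.
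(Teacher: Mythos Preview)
Your argument is correct and follows the same Agmon-type strategy as the paper, but with two technical variations that are worth noting. First, for the weighted $L^2$ stage the paper multiplies $N_\varphi u$ by a product $\eta\,e^{\eps b}$, where $\eta$ is a spatial cutoff vanishing on a neighborhood of $\mathrm{supp}(u)$ and $b=\min\{d_\kappa(z,\cdot),d_\kappa(z,w)\}$; this makes the cross term $(\eta e^{\eps b}\Box_\varphi v,\eta e^{\eps b}v)_\varphi$ vanish identically, at the price of having $\nabla\eta$ supported on an annulus around $z$ that must be handled separately. You instead keep the cross term and bound it directly using the support and radius-function properties of $u$, truncating only by $\min(\cdot,M)$; this is a bit more streamlined. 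Second, to pass from $v=N_\varphi u$ to $f=\dbar^*_\varphi v$ the paper localizes near $w$ and invokes the Caccioppoli inequality (Lemma \ref{MKH-caccioppoli}), whereas you use the commutator identity $\dbar^*_\varphi(\eta_M v)=\eta_M f-\sum_j(\partial_j\eta_M)v_j$ together with $\|\dbar^*_\varphi(\eta_M v)\|_\varphi^2\leq\mathcal{E}_\varphi(\eta_M v)$ to obtain a \emph{global} weighted $L^2$ bound for $f$; this avoids the separate Caccioppoli step entirely. One small slip: $f$ is a scalar function, so Lemma \ref{adm-hol} applies to it directly rather than ``componentwise.'' Otherwise the proposal is sound, and the passage to the pointwise bound via Lemma \ref{adm-hol} and the triangle inequality for $d_\kappa$ matches the paper's.
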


A few comments before the proof:\begin{enumerate}
\item The distance $d_\kappa$ and the corresponding metric balls $B_\kappa(z,r)$ associated to $\kappa$ are defined in Section \ref{radius-sec}.
\item The definition of $\mu$-coercivity (Definition \ref{coerc-def}) shows that $\mu$ is dimensionally the inverse of a length, and this is consistent with our requirement that $\kappa=\mu^{-1}$ be a radius function.
\item If $\Box_\varphi$ is $\kappa^{-1}$-coercive for some bounded radius function $\kappa$ that does not satisfy condition (2) of the statement, then we can put $\widetilde{\kappa}:=\kappa\vee\rho$. By Proposition \ref{radius-max} of Section \ref{radius-sec}, $\widetilde{\kappa}$ is a radius function, condition (2) is trivially satisfied, and $\Box_\varphi$ is $\widetilde{\kappa}^{-1}$-coercive, because $\widetilde{\kappa}^{-1}\leq \kappa$.
\end{enumerate}

\begin{proof} By Proposition \ref{radius-prop} of Section \ref{radius-sec} we can find allowable constants $r_0\in(0,1)$ and $R_0\geq 2$ such that \bel\label{exp-inclusion}
B_\kappa(z,r_0)\subseteq B(z,\kappa(z)/2)\subseteq B(z,\kappa(z))\subseteq B_\kappa(z,R_0-1).
\eel
There are also allowable constants $r_1,r_2\in(0,1)$ such that \bel\label{exp-inclusion2} B(w,2r_1\kappa(w))\subseteq B_\kappa(w,r_2)\subseteq B(w,\kappa(w)).\eel 
If $d_\kappa(z,w)\geq R_0$, we have $B_\kappa(z,R_0-1)\cap B_\kappa(w,r_2)=\varnothing$. Since $\kappa(w)\geq\rho_{\text{max}}(w)$, the canonical solution $f$ is holomorphic on $B(w,r_1\rho(w))$. By Lemma \ref{adm-hol}, we have\be
|f(w)|^2e^{-2\varphi(w)}\lesssim \rho(w)^{-2n}\int_{B(w,r_1\rho(w))}|f|^2e^{-2\varphi}.
\ee 
Recall from part (ii) of Proposition \ref{coerc-prop} that $f=\dbar^*_\varphi N_\varphi u$. Since $\Box_\varphi N_\varphi u=u$ vanishes on $B(w,2r_1\kappa(w))$, Lemma \ref{MKH-caccioppoli} yields
\bee
\int_{B(w,r_1\rho(w))}|f|^2e^{-2\varphi}&\leq& \int_{B(w,r_1\kappa(w))}|f|^2e^{-2\varphi}\\
&\lesssim& \kappa(w)^{-2}\int_{B(w,2r_1\kappa(w))}|N_\varphi u|^2e^{-2\varphi}.
\eee
Putting our estimates together, we see that we are left with the task of proving the $L^2$ estimate (with $\eps>0$ admissible):\bel\label{exp-L2}
\kappa(w)^{-2}\int_{B(w,2r_1\kappa(w))}|N_\varphi u|^2e^{-2\varphi} \lesssim \kappa(z)^2e^{-2\eps d_\kappa(z,w)}||u||_\varphi^2.
\eel
Let $\ell:[0,+\infty)\rightarrow[0,1]$ be the continuous function equal to $0$ on $[0,\kappa(z)/2]$, equal to $1$ on $[\kappa(z),+\infty)$, and affine in between. By \eqref{exp-inclusion} and \eqref{exp-inclusion2}, the function $\eta(z'):=\ell(|z'-z|)$ is equal to $0$ on $B(w,2r_1\kappa(w))$, equal to $1$ on $B_\kappa(w,r_2)$, and 
\bel\label{exp-nabla-eta}\sup_{z'\in B(z,\kappa(z))}|\nabla\eta(z')|=\frac{\kappa(z)^{-1}}{2}.\eel 
We also need to define $b(z'):=\min\{d_\kappa(z,z'),d_\kappa(z,w)\}$. We know by Proposition \ref{radius-prop} that $d_\kappa(z,\cdot)$ is Lipschitz, and hence $b$ is also Lipschitz. Moreover, estimate \eqref{radius-nabla} gives \bel\label{exp-nabla-b}
|\nabla b(z')|\lesssim\kappa(z')^{-1},\eel  and $||b||_\infty\leq d_\kappa(z,w)$. From these facts, one may easily conclude that $\eta e^{\eps b}$ is a real-valued bounded Lipschitz function, for any $\eps>0$. By Proposition \ref{MKH-computation}, we obtain
\bee 
\mathcal{E}_\varphi(\eta e^{\eps b}N_\varphi u)&=&\frac{1}{4}\int_{\C^n}|\nabla(\eta e^{\eps b})|^2|N_\varphi u|^2e^{-2\varphi} + \Re(\eta e^{\eps b} u,\eta e^{\eps b} N_\varphi u)_\varphi \\
&\lesssim&\int_{\C^n}|\nabla\eta|^2 e^{2\eps b}|N_\varphi u|^2e^{-2\varphi}+\eps^2\int_{\C^n}\eta^2 e^{2\eps b}|\nabla b|^2|N_\varphi u|^2e^{-2\varphi},
\eee
where we used the fact that $u$ vanishes on the support of $\eta$. By the $\kappa^{-1}$-coercivity of $\Box_\varphi$, \eqref{exp-nabla-eta} and \eqref{exp-nabla-b}, we get\bee
\int_{\C^n}\kappa^{-2}\eta^2 e^{2\eps b}|N_\varphi u|^2e^{-2\varphi}&\lesssim&\kappa(z)^{-2}\int_{B(z,\kappa(z))} e^{2\eps b}|N_\varphi u|^2e^{-2\varphi}\\
&+&\eps^2\int_{\C^n}\eta^2 e^{2\eps b}\kappa^{-2}|N_\varphi u|^2e^{-2\varphi}.
\eee If $\eps\leq\eps_0$, where $\eps_0$ is allowable, recalling that on $B(z,\kappa(z))\subseteq B_\kappa(z,R_0-1)$ we have $e^{2\eps b(z')}\leq e^{2\eps d_\kappa(z,z')}\lesssim 1$, we find
\be\int_{\C^n}\kappa^{-2}\eta^2 e^{2\eps b}|N_\varphi u|^2e^{-2\varphi}\lesssim \kappa(z)^{-2}\int_{B(z,\kappa(z))}|N_\varphi u|^2e^{-2\varphi}.\ee
Notice that:\begin{enumerate}
\item[(a)] $b\geq d_\kappa(z,w)-1$ on $B(w, 2r_1\kappa(w))\subseteq B_\kappa(w,r_2)$, and that $\eta\equiv1$ on this ball,
\item[(b)] $\kappa^{-2}\gtrsim \kappa(w)^{-2}$ on $B(w,\kappa(w))$, and hence on $B(w, 2r_1\kappa(w))$, because $\kappa$ is a radius function,
\item[(c)] $\kappa(z)^{-2}\lesssim \kappa^{-2}$ on $B(z,\kappa(z))$, again because $\kappa$ is a radius function.
\end{enumerate}
For $\eps>0$ allowable, we then have
\bel\label{exp-1}
\kappa(w)^{-2}\int_{B(w, 2r_1\kappa(w))}|N_\varphi u|^2e^{-2\varphi}\lesssim e^{-2\eps d_\kappa(z,w)}\int_{\C^n}\kappa^{-2}|N_\varphi u|^2e^{-2\varphi}.
\eel
By part (i) of Proposition \ref{coerc-prop} and the fact that $u$ is supported where $\kappa\lesssim \kappa(z)$, we have\bel\label{exp-2}
\int_{\C^n}\kappa^{-2}|N_\varphi u|^2e^{-2\varphi}\lesssim \kappa(z)^2||u||_\varphi^2.
\eel
Putting \eqref{exp-1} and \eqref{exp-2} together we finally obtain \eqref{exp-L2} and hence the thesis.
\end{proof}

\section{Pointwise bounds for weighted Bergman kernels}\label{bergman-sec}
 
 To prove the pointwise bounds for weighted Bergman kernels we use a technique introduced in \cite{kerzman}, and adapted to the weighted case in \cite{delin}. For the sake of completeness, we state as a lemma the relevant part of \cite{delin} and recall its proof. 
 
 We continue working under the assumptions of Theorem \ref{exp-thm}, that is $\varphi$ is an admissible weight and $\kappa$ is a bounded radius function such that $\kappa\geq \rho$ and $\Box_\varphi$ is $\kappa^{-1}$-coercive.
 
 Let $\eta$ be a radial test function supported on the unit ball of $\C^n$ such that $\int_{\C^n}\eta=1$, and put \be\eta_z(w):=\frac{1}{(\delta\rho_{\text{max}}(z))^{2n}}\eta\left(\frac{w-z}{\delta\rho(z)}\right),\ee where $\delta>0$ is an allowable constant chosen so that the support of $\eta_z$, i.e., $B(z,\delta\rho(z))$, is contained in $B_\kappa(z,r_0)$, with $r_0$ as in Theorem \ref{exp-thm} (this is possible by Proposition \ref{radius-prop} of Section \ref{radius-sec}).

\begin{lem}\label{bergman-fz}
For every $z\in\C^n$ there exists a holomorphic function $H_z$ defined on $B(z,\rho(z))$ that vanishes in $z$ and such that\be
f_z:=\eta_z e^{\overline{H_z}+2\varphi}\in \mathcal{D}_0(\dbar).
\ee Moreover, we have the following inequalities\be
||f_z||_\varphi\lesssim e^{\varphi(z)}\rho(z)^{-n},\quad ||\dbar f_z||_\varphi\lesssim e^{\varphi(z)}\rho(z)^{-n-1}.
\ee
\end{lem}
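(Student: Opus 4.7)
The plan is to build $H_z$ from the auxiliary bounded potential $\psi$ supplied by Proposition \ref{adm-gauge}: there is a $C^2$ real function $\psi$ on $B(z,\rho(z))$ with $H_\psi = H_\varphi$ and $\sup_{B(z,\rho(z))}|\psi|\le C_n$. Since $\varphi-\psi$ is pluriharmonic on this ball, I define $H_z$ as the holomorphic function on $B(z,\rho(z))$ determined by
\[ \Re(H_z) \;=\; \psi - \varphi - (\psi(z)-\varphi(z)), \qquad H_z(z)=0, \]
the first identity pinning down $H_z$ up to a purely imaginary additive constant and the second fixing that constant. Then $f_z=\eta_z e^{\overline{H_z}+2\varphi}$ is smooth and compactly supported in $B(z,\delta\rho(z))\subset B(z,\rho(z))$, so it lies in $\mathcal{D}_0(\dbar)$ automatically.

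The key algebraic observation, using $\overline{H_z}=2\Re(H_z)-H_z$ with $c:=\psi(z)-\varphi(z)$, is that $\overline{H_z}+2\varphi = 2\psi - 2c - H_z$ on the support of $\eta_z$, whence
\[ f_z \;=\; e^{-2c}\,\eta_z\,e^{2\psi}\,e^{-H_z}. \]
This rewrites $f_z$ as a scalar $e^{-2c}=e^{2(\varphi(z)-\psi(z))}$ (which produces the $e^{\varphi(z)}$ in the final bound), a bounded real bump $\eta_z e^{2\psi}$, and the holomorphic factor $e^{-H_z}$. A direct computation using $\Re(H_z)=\psi-\varphi-c$ simplifies $|f_z|^2 e^{-2\varphi}$ to $e^{-2c}\eta_z^2 e^{2\psi}$, and $|\psi|\le C_n$ then yields $|f_z|^2 e^{-2\varphi}\lesssim e^{2\varphi(z)}\eta_z^2$ pointwise. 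Since $\int \eta_z^2\lesssim \rho(z)^{-2n}$ by the scaling of the bump, this gives $\|f_z\|_\varphi\lesssim e^{\varphi(z)}\rho(z)^{-n}$.

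For the $\dbar$ estimate, the holomorphicity of $e^{-H_z}$ kills one Leibniz term, leaving
\[ \dbar f_z \;=\; e^{-2c}\,\bigl(\dbar\eta_z+2\eta_z\,\dbar\psi\bigr)\,e^{2\psi}\,e^{-H_z}. \]
The bound $\|\dbar\eta_z\|_\infty\lesssim \rho(z)^{-2n-1}$ is immediate from the scaling of $\eta_z$. The remaining ingredient is an $L^\infty$ gradient bound $|\dbar\psi|\lesssim \rho(z)^{-1}$ on the support of $\eta_z$; I would obtain it from interior regularity for the Poisson equation $\Delta\psi=\Delta\varphi$, using $|\psi|\le C_n$ on $B(z,\rho(z))$ together with $\|\Delta\varphi\|_{L^\infty(B(z,\rho(z)))}\lesssim \rho(z)^{-2}$ from Proposition \ref{pot-rsquared}, after rescaling $B(z,\rho(z))$ to the unit ball to reduce to a standard estimate. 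Granted the gradient bound, the same simplification as in the previous paragraph produces $|\dbar f_z|^2 e^{-2\varphi}\lesssim e^{2\varphi(z)}\rho(z)^{-4n-2}$ on a support of volume $\sim\rho(z)^{2n}$, yielding $\|\dbar f_z\|_\varphi\lesssim e^{\varphi(z)}\rho(z)^{-n-1}$. Everything except this gradient estimate is formal manipulation, so the interior regularity bound for $\psi$ is the only genuine analytic input beyond Proposition \ref{adm-gauge} and the definition of $\rho$, and I expect it to be the main (though mild) obstacle.
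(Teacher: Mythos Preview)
Your construction of $H_z$ and the bound on $\|f_z\|_\varphi$ match the paper exactly. The difference is in the $\dbar$-estimate: you rewrite $f_z=e^{-2c}\eta_z e^{2\psi}e^{-H_z}$, kill the holomorphic factor, and reduce everything to the pointwise bound $|\dbar\psi|\lesssim\rho(z)^{-1}$ on $B(z,\delta\rho(z))$, which you extract from interior elliptic regularity (rescale to the unit ball, use $|\psi|\le C_n$ and $\|\Delta\psi\|_\infty=\|\Delta\varphi\|_\infty\le\rho(z)^{-2}$, then apply, e.g., the decomposition into a Newtonian potential plus a harmonic remainder, or $W^{2,p}$ estimates with $p>2n$). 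This is correct and clean.

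The paper instead avoids any gradient bound on $\psi$: it computes $\dbar f_z$ directly and is left with the term $\int\eta_z^2|\dbar(2\psi)|^2$, which it controls by the identity
\[
\Delta(e^{2\psi})e^{-2\psi}=\Delta(2\psi)+|\nabla(2\psi)|^2\ge 4|\dbar(2\psi)|^2
\]
(using $\Delta\psi=\Delta\varphi\ge0$), followed by an integration by parts that throws the Laplacian onto $\eta_z^2$. Your route is more transparent and requires only a mild appeal to standard elliptic theory; the paper's trick is entirely self-contained but less obvious. Either argument suffices.
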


\begin{proof} Let $\psi$ be the function given by Proposition \ref{adm-gauge} and $F$ the holomorphic function on $B(z,\rho(z))$ such that $\psi-\varphi=\Re(F)$. We define $H_z(w):=F(w)-F(z)$. Proposition \ref{adm-gauge} also gives the bound\bel\label{bergman-gauge}
||\varphi+\Re(F)||_\infty\lesssim1.
\eel
Let us check that $f_z:=\eta_z e^{\overline{H_z}+\varphi}$ verifies the inequalities of the statement. First of all,\bee
||f_z||_\varphi^2&\lesssim&\rho(z)^{-4n}\int_{B(z,\delta\rho(z))}|e^{H_z+2\varphi}|^2e^{-2\varphi}\\
&=&\rho(z)^{-4n}e^{-2\Re(F(z))}\int_{B(z,\delta\rho(z))}e^{2\Re(F)+2\varphi}\\
&\lesssim &\rho(z)^{-4n}e^{2\varphi(z)}\rho(z)^{2n}=\rho(z)^{-2n}e^{2\varphi(z)}, 
\eee where in the third line we used \eqref{bergman-gauge}. This proves the bound on $||f_z||_\varphi$. 

Next, we compute (using again \eqref{bergman-gauge})\bee
||\dbar f_z||_\varphi^2&=&\int_{\C^n}|\dbar f_z|^2e^{-2\varphi}\\
&=&e^{-2\Re(F(z))}\int_{\C^n}|\dbar \eta_z+\eta_z\dbar(\overline{F}+2\varphi)|^2e^{2\Re(F)+2\varphi}\\
&\lesssim& e^{-2\varphi(z)}\int_{\C^n}|\dbar \eta_z|^2+e^{-2\varphi(z)}\int_{\C^n}\eta_z^2|\dbar(2\Re(F)+2\varphi)|^2,
\eee where in the last term we used the fact that $\dbar F=0$. The key observation is that, since $\Delta(\Re(F)+\varphi)=\Delta\varphi\geq0$,\bee
\Delta(e^{2\Re(F)+2\varphi})e^{-2\Re(F)-2\varphi}&=&\Delta(2\Re(F)+2\varphi)+|\nabla(2\Re(F)+2\varphi)|^2\\
&\geq&4|\dbar(2\Re(F)+2\varphi)|^2.
\eee Coming back to our estimate, we have\bee
e^{-2\varphi(z)}\int_{\C^n}\eta_z^2|\dbar(2\Re(F)+2\varphi)|^2&\lesssim&e^{-2\varphi(z)}\int_{\C^n}\eta_z^2\Delta(e^{2\Re(F)+2\varphi})e^{-2\Re(F)-2\varphi}\\
&\lesssim &e^{-2\varphi(z)}\int_{\C^n}\Delta(\eta_z^2)e^{2\Re(F)+2\varphi}\lesssim e^{-2\varphi(z)}\int_{\C^n}\Delta(\eta_z^2), 
\eee where we used an integration by parts and \eqref{bergman-gauge}. Since it is easily seen that \be
\int_{\C^n}|\dbar \eta_z|^2+\int_{\C^n}\Delta(\eta_z^2)\lesssim \rho(z)^{-2n-2}, 
\ee the estimates of the statement are proved.\end{proof}

We can finally state our main result.

\begin{thm}\label{bergman-thm}
Under the assumptions of Theorem \ref{exp-thm}, there is an allowable constant $\eps>0$ such that the pointwise bound
\be
|K_\varphi(z,w)|\lesssim e^{\varphi(z)+\varphi(w)}\frac{\kappa(z)}{\rho(z)}\frac{e^{-\eps d_\kappa(z,w)}}{\rho(z)^{n}\rho(w)^{n}}
\ee 
holds for every $z,w\in\C^n$.
\end{thm}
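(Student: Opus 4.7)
The plan is to use the Kerzman trick via identity (iii) of Proposition \ref{coerc-prop}, together with the approximate-delta family $\{f_z\}$ from Lemma \ref{bergman-fz} and the exponential decay of canonical solutions from Theorem \ref{exp-thm}. I will split the argument by whether $d_\kappa(z,w) \geq R_0$ (exponential regime) or $d_\kappa(z,w) < R_0$ (diagonal regime), with $R_0$ the allowable constant from Theorem \ref{exp-thm}.

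\textbf{Step 1: reproducing identity for $f_z$.} First I compute $(B_\varphi f_z)(w)$ directly from the definition of the Bergman kernel:
\be
(B_\varphi f_z)(w) = \int_{\C^n} K_\varphi(w,u)\,\eta_z(u)\,e^{\overline{H_z(u)}+2\varphi(u)}e^{-2\varphi(u)}\,d\mathcal{L}(u) = \int_{\C^n} K_\varphi(w,u)\,e^{\overline{H_z(u)}}\,\eta_z(u)\,d\mathcal{L}(u).
\ee
Since $K_\varphi(\cdot,w)\in A^2(\C^n,\varphi)$, the function $u\mapsto K_\varphi(w,u)$ is anti-holomorphic on $\C^n$, and $H_z$ is holomorphic on $B(z,\rho(z))\supseteq \operatorname{supp}\eta_z$, so $u\mapsto K_\varphi(w,u)e^{\overline{H_z(u)}}$ is anti-holomorphic on $B(z,\delta\rho(z))$. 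Because $\eta_z$ is a radial bump with total integral $1$ centered at $z$, the mean value property (applied to the holomorphic conjugate) yields
\be
(B_\varphi f_z)(w) = K_\varphi(w,z)\,e^{\overline{H_z(z)}} = K_\varphi(w,z),
\ee
using $H_z(z)=0$. Thus $|K_\varphi(z,w)| = |(B_\varphi f_z)(w)|$.

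\textbf{Step 2: Kerzman trick and exponential regime.} Apply identity (iii) of Proposition \ref{coerc-prop} to $f_z\in \mathcal{D}_0(\dbar)$:
\be
B_\varphi f_z = f_z - \dbar^*_\varphi N_\varphi \dbar f_z.
\ee
Assume $d_\kappa(z,w)\geq R_0$. Since $\operatorname{supp}f_z \subseteq B(z,\delta\rho(z)) \subseteq B_\kappa(z,r_0)$ by the choice of $\delta$, and the latter ball lies strictly inside $B_\kappa(z,R_0)$, we have $f_z(w)=0$. The datum $\dbar f_z$ is $\dbar$-closed (trivially) and supported in $B_\kappa(z,r_0)$, so Theorem \ref{exp-thm} applied to the canonical solution $\dbar^*_\varphi N_\varphi \dbar f_z$ gives
\be
|(\dbar^*_\varphi N_\varphi \dbar f_z)(w)| \lesssim e^{\varphi(w)}\kappa(z)e^{-\eps d_\kappa(z,w)}\rho(w)^{-n}\|\dbar f_z\|_\varphi.
\ee
Plugging in $\|\dbar f_z\|_\varphi \lesssim e^{\varphi(z)}\rho(z)^{-n-1}$ from Lemma \ref{bergman-fz} and combining with Step 1 yields exactly the claimed bound in the regime $d_\kappa(z,w)\geq R_0$.

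\textbf{Step 3: short-distance regime.} For $d_\kappa(z,w) < R_0$, apply Lemma \ref{adm-hol} with $h=K_\varphi(\cdot,w')\in A^2(\C^n,\varphi)$ and $r=\rho(z)$ to obtain the diagonal bound $K_\varphi(z,z) \lesssim e^{2\varphi(z)}\rho(z)^{-2n}$, since $K_\varphi(z,z) = \sup_{\|h\|_\varphi=1}|h(z)|^2$ by \eqref{berg-diag}. Combining with the Cauchy--Schwarz consequence $|K_\varphi(z,w)|\leq K_\varphi(z,z)^{1/2}K_\varphi(w,w)^{1/2}$ (valid since $K_\varphi(\cdot,w)\in A^2$), I get
\be
|K_\varphi(z,w)| \lesssim e^{\varphi(z)+\varphi(w)}\rho(z)^{-n}\rho(w)^{-n}.
\ee
Since $d_\kappa(z,w)<R_0$ makes $e^{-\eps d_\kappa(z,w)}$ comparable to $1$, and $\kappa(z)/\rho(z)\geq 1$ by assumption (2) of Theorem \ref{exp-thm}, this bound is absorbed into the desired one.

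\textbf{Expected obstacle.} The only delicate point is Step 1: one must verify that $K_\varphi(w,\cdot)$ is anti-holomorphic on $\C^n$ (which follows from property (b) in Section \ref{prel}), that $H_z$ is holomorphic on a ball containing $\operatorname{supp}\eta_z$ (by Proposition \ref{adm-gauge} and the choice of $\delta$), and that the mean-value identity applies to the anti-holomorphic product against the radial weight $\eta_z$. Beyond this clean computation, the rest is bookkeeping: matching the bounds $\|\dbar f_z\|_\varphi \lesssim e^{\varphi(z)}\rho(z)^{-n-1}$ from Lemma \ref{bergman-fz} with the decay factor $\kappa(z)\rho(w)^{-n}e^{-\eps d_\kappa(z,w)}$ from Theorem \ref{exp-thm} precisely reproduces the asymmetric factor $\kappa(z)/\rho(z)$ in the statement.
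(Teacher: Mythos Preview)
Your proof is correct and follows essentially the same route as the paper: the reproducing identity $B_\varphi f_z(w)=\overline{K_\varphi(z,w)}$ via the mean-value property, the Kerzman decomposition from Proposition~\ref{coerc-prop}(iii), and Theorem~\ref{exp-thm} for the regime $d_\kappa(z,w)\geq R_0$ are all exactly as in the paper.

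The one genuine difference is your treatment of the short-distance regime. You use the diagonal bound $K_\varphi(z,z)\lesssim e^{2\varphi(z)}\rho(z)^{-2n}$ from Lemma~\ref{adm-hol} and \eqref{berg-diag}, together with the Cauchy--Schwarz inequality $|K_\varphi(z,w)|\leq K_\varphi(z,z)^{1/2}K_\varphi(w,w)^{1/2}$, and then absorb the missing factor $\kappa(z)/\rho(z)\geq 1$. The paper instead bounds $\|K_\varphi(z,\cdot)\|_\varphi$ directly through the Kerzman identity, using $\|\dbar^*_\varphi N_\varphi\dbar f_z\|_\varphi\leq\|\kappa\,\dbar f_z\|_\varphi$ from Proposition~\ref{coerc-prop}(ii) and the radius-function property $\kappa\lesssim\kappa(z)$ on $\operatorname{supp}\dbar f_z$; this produces the factor $\kappa(z)/\rho(z)$ organically rather than by absorption. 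Your argument is shorter and more elementary, while the paper's is more uniform with the far-regime argument; both are valid.
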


\begin{proof}
For $z\in\C^n$ let $f_z$ be as in Lemma \ref{bergman-fz} and notice that \bee
B_\varphi(f_z)(w)&=&\int_{\C^n}K_\varphi(w,w')f_z(w')e^{-2\varphi(w')}d\mathcal{L}(w')\\
&=&\int_{\C^n}K_\varphi(w,w')\eta_z(w') e^{\overline{H_z}(w')}d\mathcal{L}(w')\\
&=& K_\varphi(w,z)e^{\overline{H_w}(w)}=\overline{K_\varphi(z,w)},
\eee where in the last line we used the fact that $\eta_w$ is radial with respect to $w\in\C^n$, $\int_{\C^n}\eta_w=1$, $K_\varphi(z,\cdot)e^{\overline{H_w}}$ is harmonic, being the product of two anti-holomorphic functions, and $H_w(w)=0$. Hence, by formula \eqref{coerc-bergman} of Proposition \ref{coerc-prop}, we have\bel\label{bergman-bar}
\overline{K_\varphi(z,w)}=f_z(w)-\dbar^*_\varphi N_\varphi \dbar f_z (w).
\eel
Since $\overline{K_\varphi(z,\cdot)}$ is holomorphic, Lemma \ref{adm-hol} yields \be
|K_\varphi(z,w)|\lesssim e^{\frac{\varphi(w)}{2}}\rho(w)^{-n}||K_\varphi(z,\cdot)||_\varphi.
\ee Thanks to \eqref{bergman-bar} and inequality \eqref{coerc-canonical-bound} of Proposition \ref{coerc-prop}, we have
\bee
||K_\varphi(z,\cdot)||_\varphi&\leq&||f_z||_\varphi+||\dbar^*_\varphi N_\varphi \dbar f_z||_\varphi\\
&\lesssim& ||f_z||_\varphi+||\kappa\dbar f_z||_\varphi\\ 
&\lesssim& ||f_z||_\varphi+\left(\max_{B(z,\rho(z))}\kappa\right)||\dbar f_z||_\varphi.
\eee Now recall that $\kappa\geq\rho$ and hence that $\kappa$, being a radius function, is $\lesssim \kappa(z)$ on $B(z,\rho(z))$. Lemma \ref{bergman-fz} finally gives\be
||K_\varphi(z,\cdot)||_\varphi\lesssim  \frac{\kappa(z)}{\rho(z)}e^{\varphi(z)}\rho(z)^{-n}.
\ee
What we obtained until now is \be
|K_\varphi(z,w)|\lesssim e^{\varphi(z)+\varphi(w)}\frac{\kappa(z)}{\rho(z)}\rho(z)^{-n}\rho(w)^{-n}.
\ee
This is equivalent to the conclusion of the theorem if $d_\kappa(z,w)\lesssim 1$. We can then assume from now on that $d_\kappa(z,w)\geq R_0$, with $R_0$ the allowable constant in Theorem \ref{exp-thm}, which then implies \be
|\dbar^*_\varphi N_\varphi \dbar f_z (w)|\lesssim e^{\varphi(w)}\kappa(z)e^{-\eps d_\kappa(z,w)} \rho(w)^{-n}||\dbar f_z||_\varphi.
\ee
We conclude by Lemma \ref{bergman-fz} and the identity $\overline{K_\varphi(z,w)}=-\dbar^*_\varphi N_\varphi \dbar f_z (w)$ (which holds for $d_\kappa(z,w)\geq R_0$).
\end{proof}

\section{Weighted Kohn Laplacians\\ and matrix Schr\"odinger operators}\label{kohnschrod-sec}

In this section we show that weighted Kohn Laplacians are unitarily equivalent to certain generalized Schr\"odinger operators. This will help to specialize Theorem \ref{bergman-thm} to the case in which the eigenvalues are comparable.

In order to do that we have first to define the generalization of Schr\"odinger operators we need. 

Let us introduce the following notation: if $X(\R^d)$ is a space of scalar-valued functions defined on $\R^d$, we denote by $X(\R^d,\C^m)$ the space of $m$-dimensional vectors whose components are elements of $X(\R^d)$.

\subsection{Matrix magnetic Schr\"odinger operators}

A \emph{matrix magnetic Schr\"odinger operator} acts on $\C^m$-valued functions defined on a Euclidean space $\R^d$, and is determined by the following two data:\begin{enumerate}
\item an $m\times m$ Hermitian matrix-valued \emph{electric potential} $V$,

\item a $C^1$ \emph{magnetic potential} $A:\R^d\rightarrow\R^d$.
\end{enumerate}

The formal expression of the magnetic matrix Schr\"odinger operator is the following:\be
\mathcal{H}_{V,A}\psi:=-\Delta_A\psi+V\psi\qquad\forall \psi\in C^2(\R^d,\C^m),
\ee where $\Delta_A =-\sum_{k=1}^d\left(-\frac{\partial}{\partial x_k}+iA_k\right)\left(\frac{\partial}{\partial x_k}-iA_k\right)$ acts diagonally, i.e., componentwise, on $\psi$, and $V$ acts by pointwise matrix multiplication. Explicitly, \be
\mathcal{H}_{V,A}\psi=\left(-\Delta_A\psi_k+\sum_{\ell=1}^mV_{k\ell}\psi_\ell \right)_{k=1}^m,
\ee
or in other words, $\mathcal{H}_{V,A}$ is the matrix differential operator:\be
\mathcal{H}_{V,A}=\begin{bmatrix}
-\Delta_A+V_{11} &\cdots& V_{1m}\\
\\
\vdots&\ddots&\vdots\\
\\
V_{m1} &\cdots& -\Delta_A+V_{mm}
\end{bmatrix}.
\ee

Since $V$ is Hermitian at every point and $\Delta_A$ is formally self-adjoint, $\mathcal{H}_{V,A}$ is formally self-adjoint too: \be
\int_{\R^d}(\mathcal{H}_{V,A}\psi,\phi)=\int_{\R^d}(\psi,\mathcal{H}_{V,A}\phi) \qquad\forall \psi,\phi\in C^2_c(\R^d,\C^m),
\ee where $(\cdot,\cdot)$ is the hermitian scalar product in $\C^m$. Notice that \bel\label{schrod-energy}
\mathcal{E}_{V,A}(\psi):=\int_{\R^d}(\mathcal{H}_{V,A}\psi,\psi)=\int_{\R^d}|\nabla_A\psi|^2+\int_{\R^d}(V\psi,\psi),
\eel
where \bel\label{mag-grad}|\nabla_A\psi|^2=\sum_{k=1}^m|\nabla_A\psi_k|^2.\eel 
The first term of the right-hand side of \eqref{schrod-energy} is called the \emph{kinetic energy}, while the second is the \emph{potential energy} of $\psi$. Notice that $(V\psi,\psi)$ is the pointwise evaluation of the quadratic form associated to $V$ on $\psi$. 

If $m=1$, $\mathcal{H}_{V,A}$ is the usual magnetic Schr\"odinger operator $-\Delta_A+V$ with scalar potential $V$, and the energy takes the form $\int_{\R^d}|\nabla_A\psi|^2+\int_{\R^d}V|\psi|^2$.

The case $m\geq2$ can not be reduced to the scalar one in general, unless the matrices $V(x)$ ($x\in\R^d$) can be simultaneously diagonalized.

Observe that the above discussion defines matrix magnetic Schr\"odinger operators only formally: we are not saying anything about the domains on which they are self-adjoint, as this will not be needed for our purposes.

Matrix Schr\"odinger operators without a magnetic potential attracted some attention in the recent mathematical physics literature (see, e.g., \cite{frank-lieb-seiringer}).

\subsection{Kohn Laplacians and matrix magnetic Schr\"odinger operators}
Let $\varphi:\C^n\rightarrow \R$ be $C^2$ and plurisubharmonic. We identify $\C^n$ with $\R^{2n}$ using the real coordinates $(x_1,y_1,\dots,x_n,y_n)$ such that $z_j=x_j+iy_j$ for every $j$. It will be useful to define the \emph{symplectic gradient of $\varphi$}:\bel\label{kohnschrod-perp}
\nabla^\perp\varphi:=\left(-\frac{\partial\varphi}{\partial y_1},\frac{\partial\varphi}{\partial x_1},\dots,-\frac{\partial\varphi}{\partial y_n},\frac{\partial\varphi}{\partial x_n}\right).
\eel

It is easy to verify that the mapping\bee
U_\varphi: L^2(\C^n,\C^n)&\longrightarrow &L^2_{(0,1)}(\C^n,\varphi)\\
 u=(u_1,\dots,u_n)&\longmapsto& \sum_{j=1}^n e^\varphi u_jd\overline{z}_j
\eee
is a surjective unitary transformation. If $u\in C^2_c(\C^n,\C^n)$ then $U_\varphi u\in\mathcal{D}(\Box_\varphi)$, because it is a $(0,1)$-form with $C^2_c$ coefficients.

\begin{prop}\label{kohnschrod-prop}
Consider the $n\times n$ Hermitian matrix-valued electric potential \be
V=8H_\varphi-4\text{tr}(H_\varphi)I_n,
\ee where $I_n$ is the $n\times n$ identity matrix, and the magnetic potential \be
A=\nabla^\perp\varphi.
\ee
We have the following identity\bel\label{kohnschrod-id}
\Box_\varphi (U_\varphi u)=U_\varphi\left(\frac{1}{4}\mathcal{H}_{A,V}u\right)\qquad \forall u\in C^2_c(\C^n,\C^n).
\eel
\end{prop}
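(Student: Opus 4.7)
My approach is to prove the identity \eqref{kohnschrod-id} at the level of the associated quadratic forms, namely
\be
\mathcal{E}_\varphi(U_\varphi u) \;=\; \tfrac{1}{4}\mathcal{E}_{V,A}(u) \qquad \forall u \in C^2_c(\C^n,\C^n),
\ee
and then deduce the operator identity by polarization together with the unitarity of $U_\varphi$. Since $U_\varphi u$ has compactly supported $C^2$ coefficients, $U_\varphi u \in \mathcal{D}(\Box_\varphi)$, so $\mathcal{E}_\varphi(U_\varphi u) = (\Box_\varphi U_\varphi u, U_\varphi u)_\varphi$; the right-hand side equals $\tfrac{1}{4}\int(\mathcal{H}_{V,A} u, u)$ by the formal self-adjointness of $\mathcal{H}_{V,A}$ noted above. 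Polarizing yields $(\Box_\varphi U_\varphi u, U_\varphi v)_\varphi = (\tfrac{1}{4} U_\varphi \mathcal{H}_{V,A} u, U_\varphi v)_\varphi$ for every $v \in C^2_c$, and the density of $U_\varphi(C^2_c(\C^n,\C^n))$ in $L^2_{(0,1)}(\C^n,\varphi)$ gives \eqref{kohnschrod-id}.

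The heart of the matter is to express both forms in terms of a twisted Wirtinger operator $D_k := \dbar_k + (\dbar_k\varphi)$, whose unweighted $L^2$-adjoint is $D_k^* = -\partial_k + (\partial_k\varphi)$. The identity $\dbar_k(e^\varphi u_j) = e^\varphi D_k u_j$, together with the Morrey--Kohn--H\"ormander formula \eqref{MKH} applied to the $(0,1)$-form $U_\varphi u$, gives after cancellation of the factors $e^{\pm 2\varphi}$
\be
\mathcal{E}_\varphi(U_\varphi u) \;=\; \sum_{j,k=1}^n \int_{\C^n} |D_k u_j|^2 \;+\; 2 \int_{\C^n}(H_\varphi u, u).
\ee
For the magnetic energy, with $A = \nabla^\perp\varphi$ one verifies by a direct computation the Wirtinger-style identities
\be
(\partial_{x_k} - i A_{x_k}) + i(\partial_{y_k} - i A_{y_k}) = 2 D_k, \quad (\partial_{x_k} - i A_{x_k}) - i(\partial_{y_k} - i A_{y_k}) = -2 D_k^*,
\ee
whence $|(\partial_{x_k} - i A_{x_k}) u_j|^2 + |(\partial_{y_k} - i A_{y_k}) u_j|^2 = 2|D_k u_j|^2 + 2|D_k^* u_j|^2$, so that $\int|\nabla_A u|^2 = 2\sum_{j,k}\int(|D_k u_j|^2 + |D_k^* u_j|^2)$.

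The final ingredient is the elementary commutator identity $[D_k, D_k^*] = 2\,\partial^2\varphi/\partial z_k\partial\overline{z}_k$, obtained by direct first-order computation. Integration by parts on $C^2_c$ functions turns this into
\be
\sum_{j,k}\int |D_k^* u_j|^2 \;=\; \sum_{j,k}\int |D_k u_j|^2 \;+\; 2\int \text{tr}(H_\varphi)|u|^2.
\ee
Plugging this into the kinetic energy and adding the potential term with $V = 8 H_\varphi - 4\,\text{tr}(H_\varphi) I_n$, the trace contribution is cancelled by design, leaving
\be
\mathcal{E}_{V,A}(u) \;=\; 4\sum_{j,k}\int |D_k u_j|^2 + 8\int (H_\varphi u, u) \;=\; 4\, \mathcal{E}_\varphi(U_\varphi u),
\ee
which is the desired quadratic-form identity.

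The main obstacle is essentially careful bookkeeping: the correction $-4\,\text{tr}(H_\varphi) I_n$ in the definition of $V$ is forced precisely to absorb the trace residue produced by $[D_k, D_k^*]$ when the real magnetic kinetic energy is written in terms of the complex pair $(D_k, D_k^*)$. Keeping track of the factors of $2$ and $4$ coming from the Morrey--Kohn--H\"ormander formula, the complexification of $\nabla_A$, and the integration by parts is the only point where the argument could realistically go wrong.
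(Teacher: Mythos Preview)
Your proof is correct and follows essentially the same route as the paper: reduce to the quadratic-form identity via polarization, apply the Morrey--Kohn--H\"ormander formula, and compute the magnetic kinetic energy in terms of the twisted Wirtinger operators using a commutator. The only cosmetic difference is that the paper carries out the commutator computation in the real picture, with the magnetic vector fields $X_j=\partial_{x_j}+i\partial_{y_j}\varphi$ and $Y_j=\partial_{y_j}-i\partial_{x_j}\varphi$ and the identity $\sum_j[X_j,Y_j]=-i\Delta\varphi$, whereas you work directly with the complex pair $(D_k,D_k^*)$ and $[D_k,D_k^*]=2\,\partial^2\varphi/\partial z_k\partial\overline{z}_k$; since $X_k+iY_k=2D_k$ and $X_k-iY_k=-2D_k^*$, these are the same computation. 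The paper also isolates the scalar identity $\sum_k\int|D_k f|^2=\tfrac14\int|\nabla_A f|^2-\tfrac14\int\Delta\varphi|f|^2$ as a separate lemma (Lemma~\ref{kohnschrod-comp}) because it is reused later in the proof of Lemma~\ref{mucomparable-lem}; your argument contains this identity implicitly.
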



Recall that while $\Box_\varphi$ is a genuine self-adjoint operator, the matrix Schr\"odinger operator $\mathcal{H}_{A,V}$ has been defined only formally. Identity \eqref{kohnschrod-id} may be used to extend $\mathcal{H}_{V,A}$ to a domain on which it is self-adjoint and unitarily equivalent to the weighted Kohn Laplacian.

The proof of Proposition \ref{kohnschrod-prop} is based on a computation, which we present as a separate lemma in order to be able to use it again later.

\begin{lem}\label{kohnschrod-comp}
If $A$ is as in Proposition \ref{kohnschrod-prop}, we have\be
\sum_{j=1}^n\int_{\C^n}\left|\frac{\partial (e^\varphi f)}{\partial \overline{z}_j} \right|^2e^{-2\varphi}=\frac{1}{4}\int_{\C^n}|\nabla_A f|^2-\frac{1}{4}\int_{\C^n}\Delta\varphi|f|^2\qquad\forall f\in C^2_c(\C^n).
\ee
\end{lem}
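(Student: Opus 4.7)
My plan is to peel off the weight from the left-hand side, recognize the resulting operator as (a multiple of) a complex combination of the two real magnetic derivatives coming from $A=\nabla^\perp\varphi$, expand the pointwise square, and then use a commutator computation to identify the cross term with $\Delta\varphi$. In detail, the Leibniz rule gives $e^{-\varphi}\partial_{\overline{z}_j}(e^\varphi f)=(\partial_{\overline{z}_j}+\partial_{\overline{z}_j}\varphi)f$, so the LHS equals $\sum_j\int|(\partial_{\overline{z}_j}+\partial_{\overline{z}_j}\varphi)f|^2$. Introduce the real first-order operators
\be
X_j:=\partial_{x_j}+i(\partial_{y_j}\varphi),\qquad Y_j:=\partial_{y_j}-i(\partial_{x_j}\varphi),
\ee
which are exactly the magnetic derivatives in the $x_j$ and $y_j$ directions for $A=\nabla^\perp\varphi$. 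A direct check (just add and use $2\partial_{\overline{z}_j}=\partial_{x_j}+i\partial_{y_j}$) shows the identity
\be
X_j+iY_j=2(\partial_{\overline{z}_j}+\partial_{\overline{z}_j}\varphi),
\ee
so that $4|(\partial_{\overline{z}_j}+\partial_{\overline{z}_j}\varphi)f|^2=|(X_j+iY_j)f|^2$.

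Next, I would expand this square pointwise:
\be
|(X_j+iY_j)f|^2=|X_jf|^2+|Y_jf|^2-2\,\Im\bigl((Y_jf)\overline{X_jf}\bigr).
\ee
Summing in $j$ and using the definition $|\nabla_A f|^2=\sum_j(|X_jf|^2+|Y_jf|^2)$, the first two groups of terms integrate to $\int|\nabla_A f|^2$. Everything hinges on controlling the third term after integration.

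For this, I would exploit two facts. First, because $\varphi$ is real, both $X_j$ and $Y_j$ are formally anti-self-adjoint with respect to the unweighted $L^2$ inner product on $\C^n\equiv\R^{2n}$, which yields
\be
\int(Y_jf)\overline{X_jf}=-\int(X_jY_jf)\overline{f},\qquad \int(X_jf)\overline{Y_jf}=-\int(Y_jX_jf)\overline{f}.
\ee
Second, a short commutator computation (the only terms that survive are those where a $\partial$ hits the magnetic potential) gives
\be
[X_j,Y_j]=-i(\partial_{x_j}^2+\partial_{y_j}^2)\varphi.
\ee
Subtracting the two displayed identities and recognizing the difference on the left as $2i\,\Im\!\int(Y_jf)\overline{X_jf}$, I obtain
\be
2\,\Im\!\int(Y_jf)\overline{X_jf}=\int(\partial_{x_j}^2+\partial_{y_j}^2)\varphi\,|f|^2,
\ee
and summing in $j$ yields $2\sum_j\Im\!\int(Y_jf)\overline{X_jf}=\int\Delta\varphi\,|f|^2$. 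Combining all pieces and dividing by $4$ gives exactly the stated identity; compact support of $f$ legitimizes all integrations by parts.

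The main obstacle, as usual in such computations, is that the pointwise expansion produces the imaginary cross term $-2\,\Im((Y_jf)\overline{X_jf})$, which is \emph{not} pointwise equal to $-\tfrac12\Delta\varphi|f|^2$; the equality holds only after integration, and only thanks to the curvature of the magnetic connection, encoded in $[X_j,Y_j]$. Once one has the correct identification of $2(\partial_{\overline{z}_j}+\partial_{\overline{z}_j}\varphi)$ with $X_j+iY_j$ and carries out the commutator/integration-by-parts step cleanly, everything else is bookkeeping.
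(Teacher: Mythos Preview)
Your proof is correct and follows essentially the same approach as the paper: both introduce the magnetic derivatives $X_j=\partial_{x_j}+i\partial_{y_j}\varphi$ and $Y_j=\partial_{y_j}-i\partial_{x_j}\varphi$, use the identification $X_j+iY_j=2(\partial_{\overline{z}_j}+\partial_{\overline{z}_j}\varphi)$, expand the square, and then convert the cross term into the commutator $[X_j,Y_j]=-i(\partial_{x_j}^2+\partial_{y_j}^2)\varphi$ via the anti-self-adjointness of $X_j$ and $Y_j$. Your write-up is in fact slightly cleaner in its bookkeeping of the imaginary cross term, but the argument is the same.
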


\begin{proof}

Define the vector fields \be
X_j:=\frac{\partial}{\partial x_j}+i\frac{\partial\varphi}{\partial y_j},\quad Y_j:=\frac{\partial}{\partial y_j}-i\frac{\partial\varphi}{\partial x_j}\qquad (j=1,\dots,n).
\ee  Recalling \eqref{mag-grad}, we have $|\nabla_Af|^2=\sum_{j=1}^n\left(|X_jf|^2+|Y_jf|^2\right)$. Notice that\be
\frac{\partial}{\partial \overline{z}_j}+\frac{\partial\varphi}{\partial \overline{z}_j}=\frac{1}{2}\left(X_j+iY_j\right).
\ee and that the formal adjoints of $X_j$ and $Y_j$ are $-X_j$ and $-Y_j$ respectively. Therefore we have \bee
&&\int_{\C^n}\left|\frac{\partial (e^\varphi f)}{\partial \overline{z}_j} \right|^2e^{-2\varphi}=\sum_{j=1}^n\int_{\C^n}\left|\frac{\partial f}{\partial \overline{z}_j}+\frac{\partial\varphi}{\partial \overline{z}_j}f\right|^2\\
&=&\frac{1}{4}\sum_{j=1}^n\int_{\C^n}\left|(X_j+iY_j)f\right|^2\\
&=&\frac{1}{4}\sum_{j=1}^n\left(\int_{\C^n}\left|X_jf\right|^2+\int_{\C^n}\left|X_jf\right|^2+\int_{\C^n}X_jf\overline{iY_j f}+\int_{\C^n}iY_jf\overline{X_j f}\right)\\
&=&\frac{1}{4}\sum_{j=1}^n\left(\int_{\C^n}|\nabla_Af|^2-i\int_{\C^n}[X_j,Y_j]f\cdot \overline{f}\right)
\eee
Observing that $\sum_{j=1}^n[X_j,Y_j]=-i\Delta\varphi$, we obtain the thesis.
\end{proof}

\begin{proof}[Proof of Proposition \ref{kohnschrod-prop}]

Since both $\Box_\varphi$ and $\mathcal{H}_{V,A}$ are formally self-adjoint, by polarization it is enough to prove that \be
(\Box_\varphi U_\varphi u, U_\varphi u )_\varphi=(U_\varphi\mathcal{H}_{V,A}u,U_\varphi u)_\varphi=(\mathcal{H}_{V,A}u,u)_0,
\ee where the parenthesis on the right represent the scalar product in the unweighted space $L^2(\C^n,\C^n)$. Using Morrey-Kohn-H\"ormander formula and Lemma \ref{kohnschrod-comp}, we can write \bee
&&(\Box_\varphi U_\varphi u, U_\varphi u )_\varphi=\mathcal{E}_\varphi(U_\varphi u)\\
&=&\sum_{j,k=1}^n\int_{\C^n}\left|\frac{\partial (e^\varphi f)}{\partial \overline{z}_j} \right|^2e^{-2\varphi}+2\int_{\C^n}(H_\varphi u,u)\\
&=&\sum_{k=1}^n\left(\frac{1}{4}\int_{\C^n}|\nabla_A u_k|^2-\frac{1}{4}\int_{\C^n}\Delta\varphi|u_k|^2\right)+2\int_{\C^n}(H_\varphi u,u)\\
&=&\frac{1}{4}\left(\int_{\C^n}|\nabla_A u|^2+\int_{\C^n}((8H_\varphi-\Delta\varphi I_n) u,u)\right).
\eee To complete the proof notice that \be
\text{tr}(H_\varphi)=\sum_{j=1}^n\frac{\partial^2\varphi}{\partial z_j\partial \overline{z}_j}=\frac{1}{4}\Delta\varphi.
\ee and recall \eqref{schrod-energy}.\end{proof}

\subsection{One complex variable versus several complex variables in the analysis of $\Box_\varphi$}\label{onevsmany-sec}

Proposition \ref{kohnschrod-prop} reveals a radical difference between the one-dimensional case ($n=1$) and the higher-dimensional case ($n\geq2$) in the analysis of the weighted Kohn Laplacian. If $n=1$, the potential $V$ is the scalar function \be
8H_\varphi-4\text{tr}(H_\varphi)=4\text{tr}(H_\varphi)=\Delta\varphi,
\ee which is non-negative, while if $n\geq2$ the potential $V$ is matrix-valued and \be
\text{tr}(V)=\text{tr}(8H_\varphi-4\text{tr}(H_\varphi)I_n)=(8-4n)\text{tr}(H_\varphi)=(2-n)\Delta\varphi
\ee is non-positive. As a consequence, the potential $V$ always has non-positive eigenvalues if $n\geq2$. 

In the one-variable case one may combine Proposition \ref{kohnschrod-prop}, identity \eqref{schrod-energy} and the diamagnetic inequality \bel\label{diamag}
|\nabla_A u|\geq |\nabla|u||,
\eel which holds almost everywhere for $u\in C^1$ (see \cite{christ} for a proof). The result is \bee
\mathcal{E}_\varphi(U_\varphi u)&=&\frac{1}{4}\mathcal{E}_{V,A}(u)\\
&=&\frac{1}{4}\left(\int_\C|\nabla_Au|^2+\int_\C V|u|^2\right)\\
&\geq&\frac{1}{4}\left(\int_\C|\nabla|u||^2+\int_\C\Delta\varphi|u|^2\right).
\eee

The last term is the energy $\mathcal{E}_{\Delta\varphi,0}(|u|)$ of the compactly supported Lipschitz function $|u|$ in presence of the scalar electric potential $\Delta\varphi$ and of no magnetic field. If one can prove the bound \bel\label{onevsmany-bound}
\mathcal{E}_{\Delta\varphi,0}(u)\geq \int_\C \mu^2 |u|^2,
\eel for some $\mu:\C\rightarrow[0,+\infty)$ and for all Lipschitz functions $u$, one can immediately deduce that \be
\mathcal{E}_\varphi(u)\geq  \int_\C \left(\frac{\mu}{2}\right)^2 |u|^2e^{-2\varphi}, 
\ee i.e., that $\Box_\varphi$ is $\frac{\mu}{2}$-coercive. Notice that \eqref{onevsmany-bound} is a Fefferman-Phong inequality (like the one treated in the Appendix). This is the approach followed by Christ in \cite{christ}.\newline

Such a route is not viable in general in several variables: if $u\in C^\infty_c(\C^n,\C^n)$, applying the diamagnetic inequality we get:\bee
\mathcal{E}_\varphi(U_\varphi u)&=&\frac{1}{4}\mathcal{E}_{V,A}(u)\\
&=&\frac{1}{4}\left(\sum_{k=1}^n\int_{\C^n}|\nabla_Au_k|^2+\int_{\C^n} (Vu,u)\right)\\
&\geq&\frac{1}{4}\sum_{k=1}^n\left(\int_{\C^n}|\nabla|u_k||^2+\int_{\C^n}\lambda(V)|u_k|^2\right),
\eee
 where $\lambda(V)$, the minimal eigenvalue of $V$, is everywhere non-positive. The last term is $\sum_{k=1}^n\mathcal{H}_{\lambda(V),0}(|u_k|)$, which is not even non-negative in general, so no estimate like \eqref{onevsmany-bound} can hold.

Nevertheless, a variant of this approach can be very useful in the special case in which the eigenvalues of $H_\varphi$ are comparable, as we show in the next section.

\section{Pointwise estimates of weighted Bergman kernels\\ when the eigenvalues are comparable}\label{comp-sec}

\subsection{Statement of the result}

\begin{thm}\label{bergman-comp}
Let $\varphi:\C^n\rightarrow\R$ be $C^2$, plurisubharmonic and such that:
\begin{enumerate}
\item[\emph{(1)}] there exists $c>0$ such that 
\be
\inf_{z\in\C^n}\sup_{w\in B(z,c)}\Delta\varphi(w)>0,
\ee
\item[\emph{(2)}] $\Delta\varphi$ satisfies the reverse-H\"older inequality
\bel\label{bergman-RH}
||\Delta\varphi||_{L^\infty(B(z,r))}\leq Ar^{-2n}\int_{B(z,r)}\Delta\varphi\qquad\forall z\in\C^n, r>0,
\eel for some $A<+\infty$,
\item[\emph{(3)}] the eigenvalues of $H_\varphi$ are comparable, i.e., there exists $\delta>0$ such that \bel\label{mucomparable-ineq}
(H_\varphi(z)v,v)\geq \delta \Delta\varphi(z)|v|^2\qquad\forall z\in\C^n, \ v\in\C^n.
\eel 
\end{enumerate}
Then there is an allowable constant $\eps>0$ such that the pointwise bound
\be
|K_\varphi(z,w)|\lesssim e^{\varphi(z)+\varphi(w)}\frac{e^{-\eps d(z,w)}}{\rho(z)^{n}\rho(w)^{n}}
\ee 
holds for every $z,w\in\C^n$, where $d$ is the maximal eigenvalue distance associated to $\varphi$ (see Section \ref{adm-sec}).
\end{thm}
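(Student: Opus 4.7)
The plan is to derive Theorem \ref{bergman-comp} from Theorem \ref{bergman-thm} by producing a bounded radius function $\kappa$ comparable to $\rho$ for which $\Box_\varphi$ is $\kappa^{-1}$-coercive. Once $\kappa\sim\rho$ is secured, the factor $\kappa(z)/\rho(z)$ in the conclusion of Theorem \ref{bergman-thm} is an allowable constant and the Riemannian metrics $\kappa^{-2}dx^2$ and $\rho^{-2}dx^2$ differ only by a bounded scalar, making $d_\kappa$ comparable to $d$; Theorem \ref{bergman-thm} therefore returns precisely the bound claimed here. The proof thus decomposes into two pieces: verifying admissibility of $\varphi$ in the sense of Definition \ref{adm-dfn}, and proving $c\rho^{-1}$-coercivity for some constant $c>0$.

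Admissibility is essentially tautological: condition (2) of Definition \ref{adm-dfn} is literally hypothesis (1) of the theorem, and the $L^\infty$-doubling in condition (1) of Definition \ref{adm-dfn} follows from the reverse-H\"older hypothesis (2) by standard Muckenhoupt theory (a weight in $RH_\infty$ lies in $A_\infty$, so $\Delta\varphi\,d\mathcal{L}$ is doubling, and combining this with the reverse-H\"older comparison $\|\Delta\varphi\|_{L^\infty(B)}\lesssim|B|^{-1}\int_B\Delta\varphi$ yields $L^\infty$-doubling).

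The substantive step is the coercivity estimate. Starting from Morrey-Kohn-H\"ormander \eqref{MKH}
\be
\mathcal{E}_\varphi(\omega)=\sum_{j,k}\int|\dbar_j\omega_k|^2 e^{-2\varphi}+2\int(H_\varphi\omega,\omega)e^{-2\varphi},
\ee
hypothesis (3) immediately gives $2(H_\varphi\omega,\omega)\geq 2\delta\Delta\varphi|\omega|^2$ pointwise. My plan is to invoke the Fefferman-Phong inequality from Appendix \ref{fph-sec} componentwise on each scalar $\omega_k$ in the weighted form
\be
\sum_j\int|\dbar_j\omega_k|^2 e^{-2\varphi}+\lambda\int\Delta\varphi|\omega_k|^2 e^{-2\varphi}\geq c_\lambda\int\rho^{-2}|\omega_k|^2 e^{-2\varphi},
\ee
for a parameter $\lambda>0$ to be chosen (via Lemma \ref{kohnschrod-comp} and the substitution $\omega_k=e^\varphi u_k$, this is equivalent to a magnetic Schr\"odinger Fefferman-Phong inequality whose effective scalar potential is a not-necessarily-non-negative multiple of $\Delta\varphi$). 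Summing over $k$ and choosing $\lambda=2\delta$ so that the $\Delta\varphi$-term introduced by FP exactly cancels the one supplied by hypothesis (3), one obtains $\mathcal{E}_\varphi(\omega)\geq c_{2\delta}\int\rho^{-2}|\omega|^2 e^{-2\varphi}$, i.e., $\sqrt{c_{2\delta}}\,\rho^{-1}$-coercivity. Setting $\kappa:=\max(c_{2\delta}^{-1/2}\rho,\rho)$ then produces a bounded radius function (Propositions \ref{adm-bounded} and \ref{radius-max}) with $\kappa\geq\rho$ and $\Box_\varphi$ $\kappa^{-1}$-coercive, so Theorem \ref{bergman-thm} closes the argument.

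The main obstacle is the Fefferman-Phong inequality itself. The cancellation bookkeeping is routine, but the whole approach rests on the appendix's FP delivering a strictly positive constant $c_\lambda$ at $\lambda=2\delta$, in a regime where, after the exponential substitution, the effective scalar potential becomes non-positive --- the inequality $\lambda_{\min}(H_\varphi)\leq\text{tr}(H_\varphi)/n=\Delta\varphi/(4n)$ forces $\delta\leq 1/(4n)$, hence $\lambda=2\delta\leq 1/(2n)\leq 1/4$ whenever $n\geq2$. The reverse-H\"older hypothesis (2) is used precisely so that $\Delta\varphi$ lies in the class of potentials to which that FP applies.
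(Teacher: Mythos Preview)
Your overall architecture---verify admissibility, prove $c\rho^{-1}$-coercivity, feed this into Theorem \ref{bergman-thm} with $\kappa$ a constant multiple of $\rho$---is exactly the paper's, and your admissibility argument is correct. The gap is in the Fefferman--Phong step, and you have in fact put your finger on it yourself without resolving it.

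After the substitution $\omega_k=e^\varphi u_k$ and Lemma \ref{kohnschrod-comp}, your scalar inequality becomes
\[
\tfrac14\int_{\C^n}|\nabla_A u_k|^2+\bigl(2\delta-\tfrac14\bigr)\int_{\C^n}\Delta\varphi\,|u_k|^2\;\ge\;c_\lambda\int_{\C^n}\rho^{-2}|u_k|^2,
\]
and since $\delta\le\frac{1}{4n}\le\frac18$ for $n\ge2$, the effective potential $(2\delta-\tfrac14)\Delta\varphi$ is non-positive. Lemma \ref{fph-lem} is stated and proved only for $V\ge0$; the reverse-H\"older hypothesis on $\Delta\varphi$ does not rescue a Fefferman--Phong inequality with a non-positive potential (after the diamagnetic step one would be asking $\int|\nabla|u_k||^2-c'\int\Delta\varphi|u_k|^2\ge c''\int\rho^{-2}|u_k|^2$, which fails for $|u_k|$ nearly constant on a ball where $\Delta\varphi$ is large). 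So as written the argument does not close.

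The missing observation is that the magnetic kinetic term already hides a copy of the potential: Lemma \ref{kohnschrod-comp} says precisely that $\int|\nabla_A u_k|^2\ge\int\Delta\varphi|u_k|^2$, because the left-hand side of that lemma is non-negative. Equivalently, in the weighted picture, the first (non-negative) sum in the Morrey--Kohn--H\"ormander formula can be scaled down by the factor $4\delta\le1$ before you convert. Doing so yields
\[
\mathcal{E}_\varphi(U_\varphi u)\ge \delta\sum_j\Bigl(\int_{\C^n}|\nabla_A u_j|^2+\int_{\C^n}\Delta\varphi\,|u_j|^2\Bigr),
\]
with effective potential $\Delta\varphi\ge0$; now the diamagnetic inequality and Lemma \ref{fph-lem} apply verbatim and give $c\rho^{-1}$-coercivity. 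This is exactly the content of Lemma \ref{mucomparable-lem} in the paper, and with it your proof goes through.
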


Since $\Delta\varphi/4$ is the trace of $H_\varphi$, condition (3) is clearly equivalent to the global comparability of any pair of eigenvalues of $H_\varphi$. If \eqref{mucomparable-ineq} holds, necessarily $4\delta\leq1$.

In order to prove Theorem \ref{bergman-comp} we discuss $\mu$-coercivity under the assumption of comparability of eigenvalues.

\subsection{$\mu$-coercivity when the eigenvalues are comparable}

\begin{lem}\label{mucomparable-lem}
Let $\varphi:\C^n\rightarrow\R$ be $C^2$, plurisubharmonic and such that \begin{enumerate}
\item[\emph{(1)}] the eigenvalues of $H_\varphi$ are comparable, i.e., \eqref{mucomparable-ineq} holds,
\item[\emph{(2)}] $\Delta\varphi$ satisfies the reverse-H\"older inequality
\bel\label{mucomparable-RH}
||\Delta\varphi||_{L^\infty(B(z,r))}\leq Ar^{-2n}\int_{B(z,r)}\Delta\varphi\qquad\forall z\in\C^n, r>0.
\eel 
\end{enumerate}
Then $\Box_\varphi$ is $\mu$-coercive, where \be
\mu=c\rho^{-1}.
\ee Here $c>0$ is a constant which depends only on $\Delta\varphi$ and $\delta$, and $\rho$ is the radius function associated to the potential $\Delta\varphi$.
\end{lem}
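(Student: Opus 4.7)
My plan is to pass to the matrix magnetic Schrödinger representation via the unitary $U_\varphi$ of Proposition \ref{kohnschrod-prop}, exploit the comparability hypothesis \eqref{mucomparable-ineq} together with an auxiliary ``kinetic-to-potential'' consequence of Lemma \ref{kohnschrod-comp}, and finally invoke the scalar Fefferman-Phong inequality from Appendix \ref{fph-sec} applied to the non-negative, reverse-H\"older potential $\Delta\varphi$.

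Concretely, writing $w = U_\varphi u$ with $u \in C_c^\infty(\C^n,\C^n)$, the Morrey-Kohn-H\"ormander identity \eqref{MKH} combined with Lemma \ref{kohnschrod-comp} applied componentwise to $w_j = e^\varphi u_j$, together with the pointwise identity $(H_\varphi w,w)e^{-2\varphi} = (H_\varphi u,u)$, gives
\be
\mathcal{E}_\varphi(w) = \tfrac{1}{4}\int_{\C^n}|\nabla_A u|^2 - \tfrac{1}{4}\int_{\C^n}\Delta\varphi|u|^2 + 2\int_{\C^n}(H_\varphi u,u),
\ee
where $A = \nabla^\perp \varphi$. Applying the comparability hypothesis to the last integral yields
\be
\mathcal{E}_\varphi(w) \geq \tfrac{1}{4}\int_{\C^n}|\nabla_A u|^2 + \bigl(2\delta - \tfrac{1}{4}\bigr)\int_{\C^n}\Delta\varphi|u|^2.
\ee

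For $n \geq 2$ the coefficient $2\delta - 1/4$ is non-positive (the paper notes $4\delta \leq 1$), so this bound alone is insufficient. The key trick is that the left-hand side of the identity in Lemma \ref{kohnschrod-comp} is manifestly non-negative, giving the auxiliary inequality $\int|\nabla_A f|^2 \geq \int\Delta\varphi|f|^2$ for scalar $f$, which extends componentwise to vectors. Splitting
\be
\tfrac{1}{4}\int_{\C^n}|\nabla_A u|^2 = \delta\int_{\C^n}|\nabla_A u|^2 + \tfrac{1-4\delta}{4}\int_{\C^n}|\nabla_A u|^2
\ee
(legitimate since $0 < 4\delta \leq 1$) and applying the auxiliary inequality to the second summand converts it into $\tfrac{1-4\delta}{4}\int\Delta\varphi|u|^2$; the coefficient of $\int\Delta\varphi|u|^2$ then collapses to exactly $\tfrac{1-4\delta}{4} + 2\delta - \tfrac{1}{4} = \delta$, and I obtain
\be
\mathcal{E}_\varphi(w) \geq \delta \int_{\C^n}|\nabla_A u|^2 + \delta\int_{\C^n}\Delta\varphi|u|^2.
\ee

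The remainder is routine. The generalized diamagnetic inequality \eqref{diamag} (applied componentwise) gives $|\nabla_A u|^2 \geq |\nabla |u||^2$ a.e., and the scalar Fefferman-Phong inequality from Appendix \ref{fph-sec}, applicable because $\Delta\varphi \geq 0$ satisfies the reverse-H\"older bound \eqref{mucomparable-RH}, yields $\int|\nabla |u||^2 + \int\Delta\varphi|u|^2 \geq c\int \rho^{-2}|u|^2$ for some $c > 0$ depending only on $\Delta\varphi$. Unwinding $|u|^2 = |w|^2 e^{-2\varphi}$ produces $\mathcal{E}_\varphi(w) \geq c\delta \|\rho^{-1}w\|_\varphi^2$, i.e., $\mu$-coercivity with $\mu = \sqrt{c\delta}\,\rho^{-1}$, and a standard density argument extends the bound to all $w \in \mathcal{D}(\mathcal{E}_\varphi)$. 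The main obstacle will be the coefficient bookkeeping in the second paragraph: because the dimensional constraint on $\delta$ forces the MKH-derived coefficient $2\delta - 1/4$ to be non-positive, one must use Lemma \ref{kohnschrod-comp} twice --- once to rewrite the energy, and once (via the non-negativity of its left-hand side) to trade kinetic for potential energy in exactly the right proportion so that what remains matches the hypotheses of the scalar Fefferman-Phong inequality.
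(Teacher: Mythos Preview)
Your proof is correct and is essentially the paper's argument with the bookkeeping reordered: the paper scales the manifestly non-negative $\dbar$-kinetic term $\sum_{j,k}\int|\dbar_k(e^\varphi u_j)|^2e^{-2\varphi}$ down by the factor $4\delta\leq1$ \emph{before} invoking Lemma~\ref{kohnschrod-comp}, whereas you first convert via Lemma~\ref{kohnschrod-comp} and then recover positivity through your ``auxiliary inequality'' $\int|\nabla_A f|^2\geq\int\Delta\varphi|f|^2$, which is precisely the non-negativity of that same $\dbar$-kinetic term read through Lemma~\ref{kohnschrod-comp}. Both routes land on the identical bound $\mathcal{E}_\varphi(U_\varphi u)\geq\delta\int|\nabla_A u|^2+\delta\int\Delta\varphi|u|^2$ and finish via diamagnetic plus Fefferman--Phong in the same way.
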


To prove Lemma \ref{mucomparable-lem}, we  are going to use an argument which appears, e.g., in the proof of Theorem 5.6 of \cite{haslinger-helffer}, and a version of an inequality going back to Fefferman and Phong (see, e.g., \cite{fe-unc} or \cite{shen}), which we state first.

\begin{lem}\label{fph-lem}
Let $V:\R^d\rightarrow [0, +\infty)$ be a locally bounded function satisfying the reverse H\"older inequality\bel\label{fph-RH}
||V||_{L^\infty(B(x,r))}\leq Ar^{-d}\int_{B(x,r)}V\qquad\forall x\in\R^d, r>0,
\eel where $A<+\infty$ is a constant which is independent of $x$ and $r$.

There is a constant $C$ which depends only on $V$ such that for every $f\in C^1_c(\R^d)$ we have\be
\int_{\R^d}\rho_V^{-2}|f|^2\leq C\left(\int_{\R^d}|\nabla f|^2+\int_{\R^d}V|f|^2\right).
\ee
\end{lem}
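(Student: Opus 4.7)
The plan is to localize the inequality via the covering from Proposition \ref{radius-covering}, prove a local version on each ball of the covering, and then sum over the covering using its bounded overlap. Let $\{x_k\}_{k\in\N}$ be such that $B_k := B(x_k,\rho_V(x_k))$ covers $\R^d$ with multiplicity bounded by $K$. Since $\rho_V$ is a radius function (Proposition \ref{pot-to-rad}), $\rho_V^{-2} \lesssim \rho_V(x_k)^{-2}$ on $B_k$, so it will suffice to prove
\be
\rho_V(x_k)^{-2}\int_{B_k}|f|^2 \;\lesssim\; \int_{B_k}|\nabla f|^2 + \int_{B_k} V|f|^2
\ee
for every $k$, with an implicit constant independent of $k$. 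Summation and bounded multiplicity then yield the global statement.

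For the local inequality, the key scalar input is a quantitative lower bound on $\int_{B_k} V$. Proposition \ref{pot-rsquared} gives $\|V\|_{L^\infty(B_k)} \geq (4D)^{-1}\rho_V(x_k)^{-2}$, and the reverse H\"older hypothesis \eqref{fph-RH} then yields
\be
\int_{B_k} V \;\geq\; A^{-1}\rho_V(x_k)^d\|V\|_{L^\infty(B_k)} \;\geq\; c\,\rho_V(x_k)^{d-2},
\ee
for some $c>0$ depending only on $A$ and $D$. Writing $f_{B_k}$ for the mean of $f$ on $B_k$, a standard Poincar\'e inequality on a ball of radius $\rho_V(x_k)$ gives $\int_{B_k}|f-f_{B_k}|^2 \lesssim \rho_V(x_k)^2\int_{B_k}|\nabla f|^2$. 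Combining these with the pointwise inequality $|f|^2 \geq \tfrac{1}{2}|f_{B_k}|^2 - |f-f_{B_k}|^2$ and the $L^\infty$ bound $V \leq \rho_V(x_k)^{-2}$ on $B_k$, I obtain
\be
\int_{B_k} V|f|^2 \;\geq\; \tfrac{1}{2}|f_{B_k}|^2\!\int_{B_k}\!V - \|V\|_{L^\infty(B_k)}\!\int_{B_k}\!|f-f_{B_k}|^2 \;\geq\; c'\rho_V(x_k)^{d-2}|f_{B_k}|^2 - C\!\int_{B_k}\!|\nabla f|^2,
\ee
so that $\rho_V(x_k)^{d-2}|f_{B_k}|^2 \lesssim \int_{B_k} V|f|^2 + \int_{B_k}|\nabla f|^2$. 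On the other hand, splitting $f = (f-f_{B_k}) + f_{B_k}$ and applying Poincar\'e once more,
\be
\rho_V(x_k)^{-2}\!\int_{B_k}\!|f|^2 \;\lesssim\; \rho_V(x_k)^{-2}\!\int_{B_k}\!|f-f_{B_k}|^2 + \rho_V(x_k)^{d-2}|f_{B_k}|^2 \;\lesssim\; \int_{B_k}\!|\nabla f|^2 + \rho_V(x_k)^{d-2}|f_{B_k}|^2.
\ee
Substituting the previous bound for $\rho_V(x_k)^{d-2}|f_{B_k}|^2$ gives the desired local inequality.

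The main obstacle I anticipate is the bookkeeping of constants in the "absorption" step: one must ensure that the error term $C\int_{B_k}|\nabla f|^2$ appearing when passing from $V|f|^2$ to $V|f_{B_k}|^2$ genuinely is of the same form as the right-hand side, and that the constants depend only on $V$ through $A$, $D$, and the dimension. Everything else — the Poincar\'e inequality, the bounded-overlap summation, and the comparability of $\rho_V$ on each $B_k$ — is routine once the covering lemma and the reverse H\"older hypothesis have provided the requisite lower bound on $\int_{B_k} V$.
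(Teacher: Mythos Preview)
Your argument is correct and follows essentially the same route as the paper's proof: both combine the lower bound $\int_{B}V\gtrsim\rho_V(x)^{d-2}$ (from reverse H\"older together with Proposition \ref{pot-rsquared}), the upper bound $\|V\|_{L^\infty(B)}\leq\rho_V(x)^{-2}$, a Poincar\'e inequality on $B$, and the bounded-overlap covering of Proposition \ref{radius-covering}. The only cosmetic difference is that the paper uses the double-integral form of Poincar\'e, $\iint_{B\times B}|f(y)-f(y')|^2\,dy\,dy'\lesssim r^{d+2}\int_B|\nabla f|^2$, applied directly to the integrated inequality $\int_B V\cdot\int_B|f|^2\leq 2\|V\|_{L^\infty(B)}\iint|f(y)-f(y')|^2+|B|\int_B V|f|^2$, which bypasses the explicit mean--oscillation split; your version with $f_{B_k}$ is the standard equivalent reformulation.
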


Observe that \eqref{fph-RH} implies that $V(x)dx$ is a doubling measure, and hence $V$ satisfies \eqref{pot-doubling} (see, e.g., \cite{stein-bigbook}). In particular the radius function $\rho_V$ is well-defined. For the sake of completeness, a short proof of Lemma \ref{fph-lem} is given in the Appendix.

\begin{proof}[Proof of Lemma \ref{mucomparable-lem}]
Let $u\in C^2_c(\C^n,\C^n)$. We have \bee
\mathcal{E}_\varphi(U_\varphi u)&=&\sum_{j,k}\int_{\C^n}\left|\frac{\partial (e^\varphi u_j)}{\partial\overline{z}_k}\right|^2e^{-2\varphi}+2\int_{\C^n} (H_\varphi u,u)\\
&\geq&\sum_{j=1}^n\left(4\delta \sum_{k=1}^n\int_{\C^n}\left|\frac{\partial (e^\varphi u_j)}{\partial\overline{z}_k}\right|^2e^{-2\varphi}+2\delta\int_{\C^n}\Delta\varphi|u_j|^2\right),
\eee 
where we used the Morrey-Kohn-H\"ormander formula, hypothesis (1), and the inequality $4\delta\leq 1$. We invoke Lemma \ref{kohnschrod-comp} to obtain\be
\mathcal{E}_\varphi(U_\varphi u)\geq \delta\sum_{j=1}^n\left(\int_{\C^n}|\nabla_A u_j|^2+\int_{\C^n}\Delta\varphi|u_j|^2\right).
\ee
We can now apply the diamagnetic inequality \eqref{diamag} to deduce that\be
\mathcal{E}_\varphi(U_\varphi u)\geq  \delta\sum_{j=1}^n\left(\int_{\C^n}|\nabla |u_j||^2+\int_{\C^n}\Delta\varphi|u_j|^2\right).
\ee 
Since we assumed hypothesis (2), we can apply the Fefferman-Phong inequality (Lemma \ref{fph-lem}):\be
\mathcal{E}_\varphi(U_\varphi u)\geq C^{-1} \delta\int_{\C^n}\rho^{-2}|u|^2.
\ee We now replace $u$ with $e^{-\varphi} u$ and conclude by an elementary approximation argument (to remove the restriction that $u$ be $C^2_c$) that we omit.
\end{proof}

\subsection{Proof of Theorem \ref{bergman-comp}}
The reverse-H\"older inequality \eqref{bergman-RH} implies that the measure with density $\Delta\varphi$ with respect to Lebesgue measure is doubling, i.e.,\be
\int_{B(z,2r)}\Delta\varphi\lesssim\int_{B(z,r)}\Delta\varphi\qquad\forall z\in\C^n,\ r>0.
\ee
This, together with the reverse-H\"older inequality itself, implies condition (1) in Definition \ref{adm-dfn}. Since condition (2) of that definition is among our hypotheses, the weight $\varphi$ is admissible.

By Lemma \ref{mucomparable-lem}, $\Box_\varphi$ is $c\rho^{-1}$-coercive, where $c>0$ is admissible. An application of Theorem \ref{bergman-thm} with $k=c^{-1}\rho$ gives the thesis.

\appendix

\section{Proof of the Fefferman-Phong inequality (Lemma \ref{fph-lem})}\label{fph-sec}

The function $f$ is fixed throughout the proof. If $x\in\R^d$, we put $B=B(x,\rho_V(x))$. Integrating in $(y,y')\in B\times B$ the trivial bound\be
V(y)|f(y')|^2\leq 2V(y)|f(y)-f(y')|^2+2V(y')|f(y')|^2, 
\ee we get\be
\int_BV \int_B|f|^2\leq2 ||V||_{L^\infty(B)}\int_B\int_B|f(y)-f(y')|^2dydy'+\omega_d\rho_V(x)^d\int_B V|f|^2,
\ee 
where $\omega_d$ is the measure of the unit ball of $\R^d$.
We recall that we have the following form of Poincar\'e inequality: \be
\int_{B\times B}|f(y)-f(y')|^2dydy'\leq C_d r^{d+2}\int_B|\nabla f|^2,
\ee where $C_d$ is a constant depending only on $d$ and $B$ is any euclidean ball of radius $r$. Combining it with Proposition \ref{pot-rsquared} we find\bel\label{fph-quad}
\int_BV \int_B|f|^2\leq2C_d \rho_V(x)^d\int_B|\nabla f|^2+\omega_d\rho_V(x)^d\int_B V|f|^2,
\eel  
The reverse H\"older condition and Proposition \ref{pot-rsquared} give\bel\label{fph-int}
\int_BV\geq \frac{1}{A}\rho_V(x)^d||V||_{L^\infty(B)}\geq \frac{1}{4DA}\rho_V(x)^{d-2}.
\eel
Putting \eqref{fph-quad} and \eqref{fph-int} together, and using Proposition \ref{pot-to-rad} to bring $\rho_V^{-2}$ inside the integral, we obtain\bel\label{fph-quad-2}
\int_B\rho_V^{-2}|f|^2\leq C'\left( \int_B|\nabla f|^2+\int_B V|f|^2\right),
\eel where $C'$ depends on $V$, but not on $x$ or $f$. Summing the inequalities \eqref{fph-quad-2} corresponding to the points $x_j$ given by Proposition \ref{radius-covering} (applied to $\rho_V$) we obtain\be
\int_{\R^d}\rho_V^{-2}|f|^2\leq C'K\left( \int_{\R^d}|\nabla f|^2+\int_{\R^d} V|f|^2\right),
\ee where $K$ is the constant appearing in Proposition \ref{radius-covering}. Putting $C=C'K$ we obtain the statement.

\bibliographystyle{amsalpha}
\bibliography{weighted-bergman}

\end{document}